\definecolor{gris3}{rgb}{0.3,0.3,0.3}
\definecolor{Green}{rgb}{0,.6,0}
\definecolor{Blue}{rgb}{0,0,1}
\definecolor{Red}{rgb}{1,0,0}
\definecolor{Gray}{rgb}{0.2,0.2,0.2}
\definecolor{Maroon}{rgb}{0.6,0.05,0.03}
\newcommand{\blc}{\color{black}}
\newcommand{\bld}{\color{black}}
\newcommand{\blr}{\color{black}}
\tikzstyle{c} = [rectangle, rounded corners, draw]
\tikzset{box/.style={draw,rectangle,rounded  corners=0pt, fill=gray!15, minimum  width=3cm, minimum  height=2cm}}
\tikzset{loz/.style={draw,diamond, aspect=1.5, fill=gray!15}}
\tikzstyle{rnd}=[circle,fill=blue!25,minimum  width=5em]
\newtheorem{theorem}{Theorem}  
\newtheorem{proposition}{Proposition}
\newtheorem{lemma}{Lemma}
\newtheorem{assumption}{Assumption}
\newtheorem{definition}{Definition}
\newcommand{\Z}{\mathbb{Z}}
\newcommand{\R}{\mathbb{R}}
\newcommand{\Esp}{\mathbb{E}}
\newcommand{\rn}{\mathbb{R}^n}
\newcommand{\N}{\mathbb{N}}
\newcommand{\pb}{\mathbb{P}}
\newcommand{\dpl}{\delta^k}
\newcommand{\Dp}{\Delta^k}
\newcommand{\Dk}{\Delta^k}
\newcommand{\dk}{\delta^k}
\newcommand{\epr}{{\epsilon'}}
\newcommand{\fd}{f}
\newcommand{\fok}{f^k_0}
\newcommand{\fsk}{f^k_s}
\newcommand{\dmax}{\delta_{\max}}
\newcommand{\Fok}{F^k_0}
\newcommand{\Fsk}{F^k_s}
\newcommand{\ef}{\varepsilon_f}
\newcommand{\isucc}{\mathds{1}_{S}}
\newcommand{\ijk}{\mathds{1}_{J_k}}
\newcommand{\ijkc}{\mathds{1}_{\bar{J_k}}}
\newcommand{\isbar}{\mathds{1}_{\bar{S}}} 
\newcommand{\ite}{\mathds{1}_{\{T_{\epsilon'} > k\}}}
\newcommand{\deps}{\delta_{\epsilon'}}
\newcommand{\dmx}{\delta_{\max}}
\newcommand{\accolade}[1]{\left\lbrace#1\right\rbrace}
\newcommand{\abs}[1]{\left\lvert#1\right\rvert}
\newcommand{\E}[1]{\mathbb{E}\left(#1\right)}
\newcommand{\pr}[1]{\mathbb{P}\left(#1\right)}
\newcommand{\norme}[1]{\left\lVert#1\right\rVert}
\newcommand{\norminf}[1]{{\left\lVert#1\right\rVert}}
\title{
    Expected complexity analysis of stochastic direct-search
}
\author{
	\href{mailto:kwassi-joseph.dzahini@polymtl.ca}{Kwassi Joseph Dzahini}\thanks{GERAD and DÈpartement de MathÈmatiques et de GÈnie Industriel, \'Ecole Polytechnique de MontrÈal, C.P. 6079, Succ. Centre-ville, MontrÈal, QuÈbec H3C 3A7, Canada (\href{mailto:kwassi-joseph.dzahini@polymtl.ca}{Kwassi-Joseph.Dzahini@polymtl.ca}).
	}
}
\begin{document}
\maketitle

\vspace*{-0.5cm}

\noindent
{\bf Abstract:}
This work presents the convergence rate analysis of stochastic variants of the broad class of direct-search methods of directional type. It introduces an algorithm designed to optimize differentiable objective functions $f$ whose values can only be computed through a stochastically noisy blackbox. 
The proposed stochastic directional direct-search (SDDS) algorithm accepts new iterates by imposing a sufficient decrease condition on so called probabilistic estimates of the corresponding unavailable objective function values. The accuracy of such estimates is required to hold with a sufficiently large but fixed probability $\beta$. The analysis of this method utilizes an existing supermartingale-based framework proposed for the convergence rates analysis of stochastic optimization methods that use adaptive step sizes. It aims to show that the expected number of iterations required to drive the norm of the gradient of $f$ below a given threshold $\epsilon$ is bounded in  $\mathcal{O}\left(\epsilon^{\frac{-p}{\min(p-1,1)}}/(2\beta-1)\right)$ with $p>1$. Unlike prior analysis using the same aforementioned framework such as those of stochastic trust-region methods and stochastic line search methods, SDDS does not use any gradient information to find descent directions. However, its convergence rate is similar to those of both latter methods with a dependence on $\epsilon$ that also matches that of the broad class of deterministic directional direct-search methods which accept new iterates by imposing a sufficient decrease condition.

\bigskip 

\noindent
{\bf \bld Keywords:} Blackbox optimization, Derivative-free optimization, Stochastic optimization, Convergence rate, Direct-search, Stochastic processes.

\newpage
\section{Introduction} 
Direct-search methods constitute a broad class of derivative-free optimization (DFO) methods where at each iteration, the DFO algorithm evaluates the objective function at a collection of points and acts solely based on those function values without any model building or derivative approximation~\cite{AuHa2017,CoScVibook}. Such methods include as well those based on simplices like the classical Nelder-Mead method and its numerous variants, as those of directional type where an improvement in the objective function is guaranteed by moving along a direction defined by a better point~\cite{Vicente2013}. 

This work focuses on the convergence rate analysis of stochastic variants of the broad class of directional direct-search methods analyzed in~\cite{Vicente2013}, using a supermartingale-based framework proposed in~\cite{blanchet2019convergence} and elements from~\cite{KoLeTo03a}. It introduces a stochastic directional direct-search (SDDS) algorithm designed for stochastic blackbox optimization (BBO) and aims to solve the following unconstrained stochastic blackbox optimization problem {\bld which often arises in modern statistical machine learning}: 
\begin{equation}\label{problem}
\underset{x\in\rn}{\min}\ f(x)\quad \text{with}\quad f(x)=\Esp_{\Theta}[f_\Theta(x)]
\end{equation}
where $\Theta$ is a real-valued random variable following some unknown distribution, $f_{\Theta}$ denotes the blackbox, the stochastically noisy computable version of the objective function $f:\rn\to\R$ which is numerically unavailable, and $\Esp_{\Theta}$ denotes the expectation with respect to $\Theta$. {\bld Note that $\Theta$ is considered as a data point for many machine learning problems~\cite{paquette2018stochastic}.}

Significant theoretical and algorithmic advances have been made in the field of stochastic DFO in the recent years with the aim of solving {\bld P}roblem~\eqref{problem}. Thus, numerous algorithms have been developed, 
most of which carry out either an estimation of the gradient of $f$ using a single simulation, or a processing of the simulation model as a blackbox. However, since the simulation model can be inaccessible in many real applications, or the gradient can be too expensive to estimate computationally, direct-search optimization methods ``appear to be the most promising option''~\cite{audet2019stomads}. 

Several recent works have proposed directional direct-search algorithms with full supported convergence rates analysis. Vicente~\cite{Vicente2013} proved that to drive the gradient of an objective function below a threshold $\epsilon\in(0,1)$, the number of iterations required by the broad class of directional direct-search methods that use a sufficient decrease condition when accepting new iterates, is bounded in~$\mathcal{O}\left(\epsilon^{\frac{-p}{\min(p-1,1)}}\right)$, with $p>1$. Directional direct-search methods based on {\it probabilistic descent}, that incorporate random gradient, was recently proposed and analyzed by Gratton et al.~\cite{GrRoViZh2015} with worst-case complexity, and global rates results. However, both aforementioned previous works assume that the objective function is deterministic, i.e, function values are exactly computed. 

Audet et al.~\cite{audet2019stomads} recently proposed StoMADS, a stochastic variant of the mesh adaptive direct-search (MADS) algorithm~\cite{AuDe2006}, with full-supported convergence analysis based on Clarke calculus and martingale theory. Using an algorithmic framework similar to that of MADS, Alarie et al.~\cite{alarie2019optimization} also proposed another variant of MADS capable to optimize noisy blackboxes corrupted with Gaussian noise, and proved convergence results of the proposed method using statistical inference techniques. Nevertheless, no convergence rates analysis have been carried out for both methods. 


The main novelty of the present work is that unlike many prior research on convergence rate analysis of stochastic DFO methods (see for example~\cite{berahas2019global,blanchet2019convergence,paquette2018stochastic,wang2019stochastic} and references therein), especially those on stochastic trust-region~\cite{blanchet2019convergence} and line search~\cite{paquette2018stochastic} methods, SDDS does not use any first-order information to find descent directions. Instead, such directions are provided by a positive spanning set and are chosen in such a way to ensure that they never become close to loosing the positive spanning property. 
However, as emphasized in~\cite{Vicente2013}, ``{\it it is not unreasonable}'' to expect that SDDS shares a similar worst case complexity bound of the latter methods in term of the expected number of iterations. Indeed, one of the directions of any positive spanning set makes an acute angle with the negative gradient, provided that the objective function is continuously differentiable~\cite{KoLeTo03a,Vicente2013}. This latter remark is in fact the cornerstone of the analysis in the present manuscript. Moreover, unlike the deterministic framework of~\cite{Vicente2013}, the proposed method accepts new iterates by imposing a sufficient decrease condition on so called {\it probabilistic estimates} of the corresponding unavailable objective function values, which accuracy is required to hold with a sufficiently large but fixed probability $\beta>1/2$. 
However, even though such probability $\beta$ of encountering sufficiently accurate estimates is not required to equal one, SDDS is shown to have desirable convergence properties. Specifically, as main theoretical result of the present work, the expected number of iterations required by SDDS to drive the gradient of $f$ below a threshold $\epsilon$ is shown to be bounded in $\mathcal{O}\left(\epsilon^{\frac{-p}{\min(p-1,1)}}/(2\beta-1)\right)$, using a supermartingale-based framework proposed in~\cite{blanchet2019convergence}.
Moreover, a  subsequence of random iterates generated by SDDS is shown to drive the norm of the gradient of $f$ to zero with probability one. Note also that the analysis is made very general in the present manuscript in the sense that it is not limited to $p=2$, compared to several similar works, but instead, extends to $p>1$.
To the best of our knowledge, this research is the first to propose a convergence rate analysis of a stochastic direct-search algorithm of directional type.

This manuscript is organized as follows. Section~\ref{ExpSect2} introduces an outline of the proposed stochastic algorithm and requirements on so-called  probabilistic estimates that guarantee convergence at an appropriate rate. It is followed by Section~\ref{ExpSect3} which presents a general framework of a stochastic process that is required to carry out the convergence rate analysis in Section~\ref{ExpSect4}. Section~\ref{ExpSect4} {\bld also presents} a $\liminf$-type first-order convergence result for SDDS, followed by a discussion and suggestions for future work.

\section{The SDDS method and probabilistic estimates}\label{ExpSect2}
SDDS, the stochastic algorithm analyzed in the present manuscript, is a direct-search method that uses inexact or noisy information about the objective $f$, specifically making use of so called probabilistic estimates. This section introduces the general framework of SDDS and discusses the requirements on the probabilistic estimates that guarantee the convergence of the algorithm. 

\subsection{The stochastic directional direct-search algorithm}\label{ExpSub21}
The stochastic directional direct-search methods under study in the present manuscript use an algorithmic framework similar to that of the broad class of methods analyzed in~\cite{Vicente2013}, i.e., a framework that can describe the main features of generating set search (GSS)~\cite{KoLeTo03a}, pattern search and generalized pattern search (GPS)~\cite{AuDe03a}. 

Each iteration of a directional direct-search method is composed of two main steps: the SEARCH step which is optional and the POLL step on which relies the convergence analysis. For simplicity of presentation, Algorithm~\ref{algomads} does not show any SEARCH step. During the POLL, trial points are generated in a subset $\mathcal{P}^k=\{x^k+\dk d: d\in\mathbb{D}^k\}$ of the space of variables, where $x^k$ denotes the incumbent solution, $\dk$ the step size and $\mathbb{D}^k$ is a positive spanning set~\cite{AuHa2017,CoScVibook}. Thus, the POLL step which follows stricter rules, consists of a local exploration of the variables space, unlike the SEARCH step which consists of a global exploration. 

In Algorithm~\ref{algomads}, since objective function values $f(x)$ are unavailable,  $\fok$ and $\fsk$ denote respectively the estimates of $f(x^k)$ and $f(x^k+s^k)$, with $s^k=\dk d$, constructed making use of evaluations of the noisy objective $f_{\Theta}$. In order for the information provided by $\fok$ and $\fsk$ to determine the iteration type, i.e., successful or unsuccessful, both estimates are required to be $\varepsilon_f$-accurate, with $\ef>0$, according to the following definition similar to those in~\cite{audet2019stomads,blanchet2019convergence,chen2018stochastic,paquette2018stochastic}.
\begin{definition}\label{accurate} Let ${\rho}:(0,+\infty)\rightarrow(0,+\infty)$ be a continuous and non-decreasing function satisfying ${\rho}(t)/t\rightarrow 0$ when $t\searrow 0$. 
	$f^k$ is called $\ef$-accurate estimate of $f(x^k)$ for a given $\dk$ if \[\abs{f^k-f(x^k)}\leq\ef {\rho}(\dk).\]
\end{definition}

Following the terminology in~\cite{KoLeTo03a}, the function $\rho$ in Definition~\ref{accurate} represents the ``{\it forcing function}''. Sufficient information to determine the iteration type is provided next.
\begin{proposition}\label{prop1}
	Let $\fok$ and $\fsk$ be $\ef$-accurate estimates of $f(x^k)$ and $f(x^k+s^k)$ respectively, and let $\gamma>2$ be a fixed constant. Then the followings hold:
	\begin{eqnarray}
	\text{if}& &\fsk-\fok\leq -\gamma\ef{\rho}(\dk),\ \ \text{then}\ \ f(x^k+s^k)-f(x^k)\leq -(\gamma-2)\ef{\rho}(\dk):=u^k_{\textnormal{sto}}\label{successDecrease}\\
	\text{if}& &\fsk-\fok> -\gamma\ef{\rho}(\dk),\ \ \text{then}\ \ f(x^k+s^k)-f(x^k)> -(\gamma+2)\ef{\rho}(\dk):=\ell^k_{\textnormal{sto}}\label{failure}
	\end{eqnarray}
\end{proposition}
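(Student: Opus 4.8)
The plan is to derive both implications from the triangle inequality applied to the $\varepsilon_f$-accuracy bounds. The key observation is that $\varepsilon_f$-accuracy gives us a two-sided control on the gap between each estimate and the corresponding true value, namely $|\fok - f(x^k)| \leq \ef \rho(\dk)$ and $|\fsk - f(x^k+s^k)| \leq \ef \rho(\dk)$. Writing the true decrease as a telescoping sum, $f(x^k+s^k) - f(x^k) = \bigl(f(x^k+s^k) - \fsk\bigr) + \bigl(\fsk - \fok\bigr) + \bigl(\fok - f(x^k)\bigr)$, we can bound the first and third terms each by $\ef\rho(\dk)$ in absolute value, so the true decrease differs from the estimated decrease $\fsk - \fok$ by at most $2\ef\rho(\dk)$.

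For the first implication, I would start from the hypothesis $\fsk - \fok \leq -\gamma \ef \rho(\dk)$ and use the upper bound $f(x^k+s^k) - f(x^k) \leq (\fsk - \fok) + 2\ef\rho(\dk)$, which combined with the hypothesis yields $f(x^k+s^k) - f(x^k) \leq -\gamma\ef\rho(\dk) + 2\ef\rho(\dk) = -(\gamma-2)\ef\rho(\dk)$, exactly $u^k_{\textnormal{sto}}$. For the second implication, from $\fsk - \fok > -\gamma\ef\rho(\dk)$ I would use the lower bound $f(x^k+s^k) - f(x^k) \geq (\fsk - \fok) - 2\ef\rho(\dk) > -\gamma\ef\rho(\dk) - 2\ef\rho(\dk) = -(\gamma+2)\ef\rho(\dk)$, which is $\ell^k_{\textnormal{sto}}$. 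Note that $\gamma > 2$ is what guarantees $u^k_{\textnormal{sto}} < 0$, i.e., that a genuine (true) decrease is certified; this hypothesis is not actually needed for the algebra of either inequality but matters for the interpretation.

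There is essentially no main obstacle here — the proof is a short two-line computation using the triangle inequality twice. The only thing to be careful about is bookkeeping with the signs and making sure the accuracy bounds are applied with the correct orientation (upper bound for the "successful" case, lower bound for the "unsuccessful" case), and that both estimates are taken $\ef$-accurate with respect to the \emph{same} $\dk$, which is given in the statement. I would present the two cases in parallel, each as a single chain of inequalities, so the argument occupies just a few lines.
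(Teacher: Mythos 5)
Your proof is correct and follows exactly the same route as the paper, which also uses the telescoping identity $f(x^k+s^k)-f(x^k)=\bigl(f(x^k+s^k)-\fsk\bigr)+\bigl(\fsk-\fok\bigr)+\bigl(\fok-f(x^k)\bigr)$ together with the $\ef$-accuracy bounds on the outer terms. Your filled-in sign bookkeeping and the remark that $\gamma>2$ is only needed for interpretation are accurate; nothing is missing.
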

\begin{proof}
	The proof straightforwardly follows from Definition~\ref{accurate} and the equality 
	\[f(x^k+s^k)-f(x^k)=f(x^k+s^k)-\fsk+(\fsk-\fok)+\fok-f(x^k). \]
\end{proof}
In addition to the results in Proposition~\ref{prop1}, the definition of the iteration type in Algorithm~\ref{algomads} is motivated by the following remarks. First, notice that in the stochastic framework of StoMADS where $\rho(t)=t^2$, since as in~\eqref{failure} the inequality $\fsk-\fok> -\gamma\ef{\rho}(\delta^k_p)$ ($\delta^k_p$ denoting the so-called {\it frame} size parameter) does not necessarily lead to an increase in the unavailable objective function $f$, two types of unsuccessful iterations have been distinguished. Unsuccessful iterations which are called {\it certain}, are characterized by $\fsk-\fok\geq -\gamma\ef{\rho}(\delta^k_p)$ and lead to an increase in $f$ whenever both estimates $\fok$ and $\fsk$ are accurate, while those such that $-\gamma\ef{\rho}(\delta^k_p)<\fsk-\fok<\gamma\ef{\rho}(\delta^k_p)$ are called {\it uncertain} since they lead to $-(\gamma+2)\ef{\rho}(\dk)<f(x^k+s^k)-f(x^k)<(\gamma+2)\ef{\rho}(\dk)$. Then, even though updating the frame size parameter according to $\delta_p^{k+1}=\tau\delta^k_p$ on {\it uncertain} unsuccessful iterations, and  $\delta_p^{k+1}=\tau^2\delta^k_p$ whenever the unsuccessful iteration is {\it certain} ($\tau\in (0,1)$ being a rational number), the corresponding sequence $\{\delta_p^k\}_{k\in\N}$ was shown in~\cite{audet2019stomads} to converge to zero. Note also that this kind of update is the only one that differentiates {\it certain} iterations from those that are {\it uncertain}. In the present work, the step size parameter $\dk$ is therefore updated on unsuccessful iterations according to $\delta^{k+1}=\tau\dk$, where $\tau$ is a real number in $(0,1)$. As a consequence, {\it certain} unsuccessful iterations will not be differentiated from {\it uncertain} ones. In other words, every iteration such that $\fsk-\fok> -\gamma\ef{\rho}(\dk)$ will be called unsuccessful. 

However, let put an emphasis on the specific choice of $\tau$ by means of the following additional remarks. Note that in the general deterministic framework described in~\cite{Vicente2013}, the amount of decrease in the objective function on successful iterations is such that $f(x^k+s^k)-f(x^k)\leq -\rho(\dk):=u^k_{\text{det}}$ while unsuccessful iterations are characterized by $f(x^k+s^k)-f(x^k)> -\rho(\dk):=\ell^k_{\text{det}}$. Thus, the equality $\ell^k_{\text{det}}=u^k_{\text{det}}$ always holds, which is not the case in stochastic settings where $\ell^{k}_{\text{sto}}<u^k_{\text{sto}}$. Moreover, since  $\delta^{k+1}<\dk$ whenever the iteration $k$ is unsuccessful, then $\ell^{k+1}_{\text{det}}>u^k_{\text{det}}$. Likewise, since $\delta^{k+1}>\dk$ on  successful iterations, then $u^{k+1}_{\text{det}}<\ell^k_{\text{det}}$. Given that the equality $\ell^k_{\text{sto}}=u^k_{\text{sto}}$ can not hold in the present stochastic settings, then $\tau$ must be chosen in such a way that at least, both inequalities $\ell^{k+1}_{\text{sto}}>u^k_{\text{sto}}$ and $u^{k+1}_{\text{sto}}<\ell^k_{\text{sto}}$ hold respectively on unsuccessful and successful iterations, analogously to the deterministic framework. This means using~\eqref{successDecrease} and~\eqref{failure}, that $\tau$ must be chosen according to 
\begin{equation}
\rho(\tau\dk)<\frac{\gamma-2}{\gamma+2}\rho(\dk)\quad\text{and}\quad \rho(\tau^{-1}\dk)>\frac{\gamma+2}{\gamma-2}\rho(\dk).\label{tauChoice}
\end{equation}
It follows from~\eqref{tauChoice} that depending on the expression of the forcing function $\rho$, the choice of $\tau$ could depend on $\dk$ and hence should be made at each iteration. Thus, in order to make the present analysis simpler, the following assumption is made.

\begin{assumption}\label{rho}
	The forcing function $\rho\colon(0,+\infty)\to (0,+\infty)$ is such that $\rho(t)=ct^p$, where $c>0$ and $p>1$ are fixed constants.
\end{assumption}

Under Assumption~\ref{rho}, the choice of $\tau$ does not depend on $\dk$. More precisely, it follows from~\eqref{tauChoice} that $\tau$ must be chosen according to $0<\tau^p<\frac{\gamma-2}{\gamma+2}$, for all $k\in\N$, as specified in Algorithm~\ref{algomads}.

\begin{figure*}[ht!]
	\begin{algorithm}[H]
		\caption{SDDS}
		\label{algomads}
		\textbf{[0] Initialization}\\
		\hspace*{10mm}Choose $x^0\in\rn$, $\delta^0>0$, $\ef>0$, $\gamma>2$, $c>0$, $p>1$, $0<\tau<\left(\frac{\gamma-2}{\gamma+2}\right)^{1/p}$, $j_{\max}\in\N$\\ 
		\hspace*{10mm}and $\delta_{\max}=\tau^{-j_{\max}}\delta^0$.\\
		\hspace*{10mm}Set the iteration counter $k \gets 0$.\\
		\textbf{[1] Poll}\\
		\hspace*{10mm}Select a positive spanning set $\mathbb{D}^k$. \\
		\hspace*{10mm}Generate a set $\mathcal{P}^k$ of Poll points such that $\mathcal{P}^k=\{x^k+\dk d:  d\in \mathbb{D}^k\}$.\\ 
		\hspace*{10mm}Obtain estimates $\fok$ and $\fsk$ of $\fd(x^k)$ and $\fd(x^k+s^k)$,  
		respectively, {\blc using objective func-}\\
		\hspace*{10mm}{\blc tion evaluations.}\\
		\hspace*{10mm}\textbf{{\blc Success}}\\
		\hspace*{10mm}If $\fsk -\fok \leq -\gamma c\ef(\dk)^p$ for some $s^k=\dk d^k\in \{\dk d: d\in \mathbb{D}^k\}$,  \\
		\hspace*{10mm}{\blc Set} $x^{k+1}\gets x^k+s^k$, and $\delta^{k+1}\gets \min\{\tau^{-1}\dk,\dmax\}$. \\
		\hspace*{10mm}\textbf{{\blc Failure}}\\
		\hspace*{10mm}Otherwise set $x^{k+1}\gets x^k$ and $\delta^{k+1}\gets\tau\dk$. \\
		\textbf{[2] Termination}\\
		\hspace*{10mm}If no termination criterion is met, \\
		\hspace*{10mm}Set $k\gets k+1$ and go to \textbf{[1]}.\\
		\hspace*{10mm}Otherwise stop.
	\end{algorithm}
	\caption[]{\small{Pseudo code of the Stochastic {\bld D}irectional {\bld D}irect-{\bld S}earch {\bld (SDDS)} algorithm. Success or failure is determined during the Poll at iteration $k$, using information provided by both estimates $\fok$ and $\fsk$ in order to update the step size parameter $\dk$ and the current iterate $x^k$. As long as no stopping criteri{\bld on} {\bld is} met, a new iteration is initiated with a new step size parameter $\delta^{k+1}$.}}
\end{figure*}

\subsection{Probabilistic estimates}\label{ExpSub22}

Following the notation in~\cite{bhattacharya2007basic}, all stochastic quantities in the present manuscript live on the same probability space $(\Omega,\mathcal{F},\pb)$, where $\Omega$ is a nonempty set referred to as the sample space, $\mathcal{F}$ is a collection of events (subsets of $\Omega$) called a $\sigma$-field and $\pb$ is a finite measure on the measurable space $(\Omega,\mathcal{F})$ satisfying $\pr{\Omega}=1$ and referred to as probability measure. The elements $\omega\in\Omega$ are referred to as possible outcomes or sample points. When $\R^n$ is given its Borel $\sigma$-field, i.e., the one generated by the open sets, a random variable or random map $X$ is a measurable map on the probability space $(\Omega,\mathcal{F},\pb)$ into the measurable space $(\R^n,\mathcal{B}(\R^n))$. Measurability meaning that each event $\{X\in I\}:=X^{-1}(I)$ belongs to  $\mathcal{F}$ for all $I\in \mathcal{B}(\R^n)$~\cite{bhattacharya2007basic}. 

The estimates $\fsk$ and $\fok$ constructed at iteration $k$ of Algorithm~\ref{algomads}, based on random information provided by the noisy objective $f_{\Theta}$, can be considered as realizations of random estimates $\Fsk$ and $\Fok$ respectively. Thus, because of the randomness stemming from such random estimates whose behavior influences each iteration $k$, Algorithm~\ref{algomads} { results in a stochastic process} $\{X^k, S^k, \Dk,\Fsk,\Fok \}$. In general, uppercase letters will be used to denote random variables while lowercase letters will be used for their realizations. For example, $x^k=X^k(\omega)$, $s^k=S^k(\omega)$ and $\delta^k=\Delta^k(\omega)$ denote respectively realizations of the random variables $X^k, S^k$ and $\Dk$. Similarly, following the notations in~\cite{audet2019stomads,blanchet2019convergence, chen2018stochastic,paquette2018stochastic},  $\fok=\Fok(\omega)$ and $\fsk=\Fsk(\omega)$ with $\Fok$ and $\Fsk$ denoting respectively estimates of $f(X^k)$ and $f(X^k+S^k)$.

The goal of this work is to show that the stochastic process resulting from Algorithm~\ref{algomads} converges at an appropriate rate with probability one, provided that the sequence $\{(\Fok,\Fsk)\}$ is sufficiently accurate with sufficiently high but fixed probability, conditioned on the past. 

As proposed in~\cite{chen2018stochastic,paquette2018stochastic}, the notion of conditioning on the past is formalized in the following definition similar to those in~\cite{audet2019stomads,blanchet2019convergence,chen2018stochastic,paquette2018stochastic,wang2019stochastic}, where $\mathcal{F}^F_{k-1}$ denotes the $\sigma$-field generated by $F^0_0,F^0_s,F^1_0,F^1_s,$\\$\dots,F^{k-1}_0$ and $F^{k-1}_s$, with $\mathcal{F}^F_{-1}$ being set to equal $\sigma(x^0)$ for completeness. Thus, on can notice that $\E{\Dk|\mathcal{F}^F_{k-1}}=\Dk$ and $\E{X^k|\mathcal{F}^F_{k-1}}=X^k$ for all $k\geq 0$, by construction of the random variables $\Dk$ and $X^k$ in Algorithm~\ref{algomads}.

\begin{definition}\label{probestim2}
	A sequence of random estimates $\{(F^k_0,F^k_s)\}$ is said to be $\beta$-probabilistically $\ef$-accurate with respect to the corresponding sequence $\{X^k, S^k, \Dk\}$ if the events 
	\begin{equation}\label{event1}
	J_k=\{F^k_0, F^k_s,\ \text{are}\ \ef\text{-accurate estimates of}\ \fd(x^k)\ \text{and}\ \fd(x^k+s^k),\ \text{respectively}\}\nonumber
	\end{equation}
	satisfy the following submartingale-like condition
	\begin{equation}\label{beta1}
	{\blc \pr{J_k\ |\ \mathcal{F}^F_{k-1}}}={\blc \E{\ijk\ |\ \mathcal{F}^F_{k-1}}}\geq \beta,\nonumber
	\end{equation}
	where $\ijk$ denotes the indicator function of the event $J_k$, that is $\ijk=1$ if $\omega\in J_k$ and $0$ otherwise.
	
	An estimate is called ``{\it good}'' if $\ijk=1$. Otherwise it is called ``{\it bad}''{\em \cite{audet2019stomads}}.
\end{definition}


Global convergence properties of deterministic directional direct-search methods strongly rely on having the step size parameters approaching zero~\cite{Vicente2013} and the fact that the function value $f(x)$ never increases after an iteration. The main challenge of the analysis in the present stochastic framework lies in the fact that this monotonicity is not always guaranteed. The key to the analysis of Algorithm~\ref{algomads} thus relies on the assumption that accuracy in function estimates ``{\it improves in coordination with the perceived progress of the algorithm}''~\cite{blanchet2019convergence}.  The analysis is based on properties of supermartingales whose increments have a decreasing tendency and depend on the change in objective function values between iterations. 

In order to show that the sequence $\{\Dk\}_{k\in\N}$ of random step size parameters converges to zero with probability one, let make the following key assumption similar to those in~\cite{audet2019stomads,paquette2018stochastic}.

\begin{assumption}\label{keyAssumption}
	For some fixed $\beta\in(0,1)$, and $\ef>0$, the followings hold for the random quantities derived from Algorithm~\ref{algomads}.
	\begin{itemize}
		\item[(i)] The sequence $\{(\Fok,\Fsk)\}$ of estimates is $\beta$-probabilistically $\ef$-accurate.
		\item[(ii)] The sequence $\{(\Fok,\Fsk)\}$ satisfies the following variance condition
		\begin{eqnarray}
		& &\E{\abs{\Fok-f(X^k)}^2|\ \mathcal{F}^F_{k-1}}\leq \ef^2(1-\beta)[\rho(\Dk)]^2 \nonumber\\
		\text{and}  & &\E{\abs{\Fsk-f(X^k+S^k)}^2|\ \mathcal{F}^F_{k-1}}\leq \ef^2(1-\beta)[\rho(\Dk)]^2\label{keysk}
		\end{eqnarray}
	\end{itemize}
\end{assumption}
By means of Assumption~\ref{keyAssumption}-{\it (ii)}, the variance in function estimates is adaptively controlled. Showing therefore that the sequence of random step size parameters converges to zero with probability one, ensures that this variance is driven to zero even though the probability $\beta$ of encountering good estimates remains fixed, thus allowing Algorithm~\ref{algomads} to behave like an exact deterministic method asymptotically.

Moreover, since the estimates satisfying Assumption~\ref{keyAssumption} can easily be constructed using techniques proposed in~\cite{blanchet2019convergence,chen2018stochastic,paquette2018stochastic}, then thorough details about their computations are not provided here again. 
Note however that if $\Theta^0$ and $\Theta^s$ are two independent random variables following the same distribution as $\Theta$ defined in~\eqref{problem}, and if $\Theta^0_i$, $i=1,2,\dots,p^k$ and $\Theta^s_j$, $j=1,2,\dots,p^k$ are independent random samples of $\Theta^0$ and $\Theta^s$ respectively, then the estimates 
\begin{equation}
\Fok=\frac{1}{p^k}\sum_{i=1}^{p^k}f_{\Theta^0_i}(x^k)\quad\text{and}\quad \Fsk=\frac{1}{p^k}\sum_{j=1}^{p^k}f_{\Theta^s_j}(x^k+s^k)      \nonumber
\end{equation}
satisfy Assumption~\ref{keyAssumption} provided that the sample size $p^k$ satisfies \[p^k\geq \frac{V}{\ef^2(1-\sqrt{\beta})[\rho(\dk)]^2},\] where the constant $V>0$ is such that the variance of $f_{\Theta}(x)$ satisfies $\mathbb{V}\left[f_{\Theta}(x)\right] \leq V<+\infty$, for all $x\in\rn$. 

Next is stated a useful lemma  similar to those in~\cite{audet2019stomads,paquette2018stochastic}, linking the probability of obtaining bad estimates to the variance assumption on function values.
\begin{lemma}\label{keyLemma}
	Let Assumption~\ref{keyAssumption} holds. Then for all $k\geq 0$, the followings hold for the random process $\{X^k,\Fok,\Fsk,\Dk\}$ generated by Algorithm~\ref{algomads}
	\begin{eqnarray}
	& &\E{\ijkc\abs{\Fok-f(X^k)}|\ \mathcal{F}^F_{k-1}}\leq \ef(1-\beta)[\rho(\Dk)] \nonumber\\
	\text{and}  & &\E{\ijkc\abs{\Fsk-f(X^k+S^k)}^2|\ \mathcal{F}^F_{k-1}}\leq \ef(1-\beta)[\rho(\Dk)]
	\end{eqnarray}
\end{lemma}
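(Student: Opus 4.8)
The plan is to derive the lemma directly from Assumption~\ref{keyAssumption} by combining the Cauchy--Schwarz inequality with the probabilistic accuracy condition. First I would focus on the second estimate, involving $\Fsk$. Conditioned on $\mathcal{F}^F_{k-1}$, the indicator $\ijkc$ is a $\{0,1\}$-valued random variable, so $\ijkc = \ijkc^2$; writing $\abs{\Fsk - f(X^k+S^k)}^2 = \ijkc \cdot \abs{\Fsk - f(X^k+S^k)}^2$ under the indicator and then applying the conditional Cauchy--Schwarz inequality, I would bound
\[
\E{\ijkc\abs{\Fsk-f(X^k+S^k)}^2 \mid \mathcal{F}^F_{k-1}} \leq \sqrt{\E{\ijkc \mid \mathcal{F}^F_{k-1}}}\cdot \sqrt{\E{\abs{\Fsk-f(X^k+S^k)}^4 \mid \mathcal{F}^F_{k-1}}}.
\]
Hmm — actually this produces a fourth moment, which Assumption~\ref{keyAssumption} does not control, so I would instead keep the power at $2$: treat the quantity as $\E{\ijkc \cdot \abs{\Fsk-f(X^k+S^k)}^2 \mid \mathcal{F}^F_{k-1}}$ and simply use that $\ijkc \leq 1$ together with the variance bound \eqref{keysk}, giving the bound $\ef^2(1-\beta)[\rho(\Dk)]^2$. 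Comparing with the claimed bound $\ef(1-\beta)[\rho(\Dk)]$, the inequality then follows provided $\ef\rho(\Dk)\le 1$; more cleanly, one notes the statement as typeset appears to carry a typographical power discrepancy, and the intended argument is the Cauchy--Schwarz/Hölder split that matches the first line.

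For the first estimate, involving $\Fok$, the clean argument is: $\E{\ijkc\abs{\Fok-f(X^k)} \mid \mathcal{F}^F_{k-1}} = \E{\ijkc \cdot \ijkc \abs{\Fok-f(X^k)} \mid \mathcal{F}^F_{k-1}}$, and by conditional Cauchy--Schwarz this is at most $\sqrt{\E{\ijkc \mid \mathcal{F}^F_{k-1}}}\cdot\sqrt{\E{\abs{\Fok-f(X^k)}^2 \mid \mathcal{F}^F_{k-1}}}$. By Assumption~\ref{keyAssumption}-(i), $\E{\ijk \mid \mathcal{F}^F_{k-1}}\ge\beta$, hence $\E{\ijkc \mid \mathcal{F}^F_{k-1}} = 1 - \E{\ijk \mid \mathcal{F}^F_{k-1}} \le 1-\beta$. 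By Assumption~\ref{keyAssumption}-(ii), $\E{\abs{\Fok-f(X^k)}^2 \mid \mathcal{F}^F_{k-1}} \le \ef^2(1-\beta)[\rho(\Dk)]^2$. Substituting, the product of the two square roots is at most $\sqrt{1-\beta}\cdot\ef\sqrt{1-\beta}\,\rho(\Dk) = \ef(1-\beta)\rho(\Dk)$, which is exactly the first claimed inequality. The second inequality is handled identically, replacing $\Fok$ and $f(X^k)$ by $\Fsk$ and $f(X^k+S^k)$ and using the second variance bound in \eqref{keysk}.

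The only genuinely delicate point is the measurability bookkeeping needed to pull $\ijkc$ out of the conditional expectation correctly and to justify conditional Cauchy--Schwarz — but since $\ijkc$ is $\mathcal{F}^F_{k}$-measurable (not $\mathcal{F}^F_{k-1}$-measurable), one cannot simply factor it out; instead one applies Cauchy--Schwarz to the product $\ijkc \cdot \bigl(\ijkc|\Fsk - f|\bigr)$ inside the conditional expectation, which is legitimate. I expect this step to be the main obstacle only in the sense of being careful about which $\sigma$-field each factor is measurable with respect to; the rest is a direct substitution of the two parts of Assumption~\ref{keyAssumption}. I would present the $\Fok$ case in full and remark that the $\Fsk$ case is verbatim analogous.
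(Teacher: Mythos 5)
Your argument is exactly the paper's proof: apply the conditional Cauchy--Schwarz inequality to $\ijkc\,\abs{\Fsk-f(X^k+S^k)}$, bound $\E{\ijkc\,|\,\mathcal{F}^F_{k-1}}\le 1-\beta$ via Assumption~\ref{keyAssumption}-(i) and the second factor via the variance condition~\eqref{keysk}, giving $\ef(1-\beta)\rho(\Dk)$, with the $\Fok$ case verbatim analogous. You are also right that the exponent $2$ in the second displayed inequality of the lemma is a typographical slip --- the paper's own proof (and its later use in Theorem~\ref{zerothOrder}) establishes and needs only the first-moment bound --- so your resolution of that point is consistent with the intended statement.
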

\begin{proof}
	The result is proved using ideas derived from~\cite{audet2019stomads,paquette2018stochastic}. 
	The proof follows straightforwardly from the conditional Cauchy-Schwarz inequality~\cite{bhattacharya2007basic} as follows
	\begin{eqnarray*}
		\E{\ijkc\abs{\Fsk-f(X^k+S^k)}|\ \mathcal{F}^F_{k-1}}&\leq& [\E{\ijkc|\ \mathcal{F}^F_{k-1}}]^{1/2} [\E{\abs{\Fsk-f(X^k+S^k)}^2|\ \mathcal{F}^F_{k-1}}]^{1/2}\\
		&\leq& (1-\beta)^{1/2}\ef(1-\beta)^{1/2}[\rho(\Dk)],
	\end{eqnarray*}
	where the last inequality follows from~\eqref{keysk} and the fact that $\E{\ijkc|\ \mathcal{F}^F_{k-1}}=\pr{\ijkc|\ \mathcal{F}^F_{k-1}}\leq 1-\beta$ thanks to Assumption~\ref{keyAssumption}-{\it (i)}. The proof for $\Fok-f(X^k)$ is the same.
\end{proof} 

\section{A renewal-reward martingale process}\label{ExpSect3}

This section presents a general stochastic process and its associated stopping time $T$ introduced in~\cite{blanchet2019convergence} for the convergence rate analysis of a stochastic trust-region method. It introduces some relevant definition, assumptions and theorem {\bld derived in the analysis of a renewal-reward process in~\cite{blanchet2019convergence}}, that will be useful for the convergence rate analysis presented in Section~\ref{ExpSect4}. Specifically, by considering the stopping time consisting of the time required by SDDS to reach a desired accuracy, Section~\ref{ExpSect4} will aim to show how the properties of this general stochastic process are satisfied for Algorithm~\ref{algomads}. 
Note that some results derived in analyzing this stochastic process in~\cite{blanchet2019convergence} are also used in~\cite{paquette2018stochastic} for the convergence rate analysis of a stochastic line search method. 

\begin{definition}\label{stoppingTime}
	A random variable $T$ is said to be a stopping time with respect to a given discrete time stochastic process $\{X_k\}_{k\in \N}$ if, for each $k\in\N$, the event $\{T=k\}$ belongs to the $\sigma$-field $\sigma(X_1,X_2,\dots,X_k)$ generated by $X_1,X_2,\dots,X_k$.
\end{definition}

Consider a stochastic process $\{(\Phi_k,\Dp)\}_{k\in\N}$ satisfying $\Phi_k\in [0,+\infty)$ and  $\Dp\in [0,+\infty)$ for all $k\in\N$. Define on the same probability space as $\{(\Phi_k,\Dp)\}_{k\in\N}$, a sequence of biased random walk process $\{W_k\}_{k\in\N}$ such that $W_0=1$,
\begin{equation}\label{randomWalk}
\pr{W_{k+1}=1\ |\ \mathcal{F}_k}=q \quad\ \text{and}\quad \pr{W_{k+1}=-1\ |\ \mathcal{F}_k}=1-q, 
\end{equation} 
where $q\in (1/2,1)$ and $\mathcal{F}_k$ denotes the $\sigma$-field generated by $\{(\Phi_0,\Delta^0,W_0), (\Phi_1,\Delta^1,W_1), \dots,$\\$ (\Phi_k,\Delta^k,W_k) \}$. 

Define the following family $\{T_{\epsilon'}\}_{\epr >0}$ of stopping times parameterized by $\epsilon'>0$, with respect to $\{\mathcal{F}_k\}_{k\in\N}$. The following assumptions are made in~\cite{blanchet2019convergence,paquette2018stochastic} in order to derive a bound on $\E{T_{\epsilon'}}$.

\begin{assumption}\label{renewalReward} The following hold for the stochastic process $\{(\Phi_k,\Delta^k,W_k)\}_{k\in\N}$.
	\begin{itemize}
		\item[(i)] There exist constant $\lambda\in (0,+\infty)$ and $\delta_{\max}=\delta^0 e^{\lambda j_{\max}}$, for some integer  $j_{\max}\in \Z$, such that $\Dk\leq \dmx$ for all $k\in\N$.
		\item[(ii)] There exists a constant $\deps=\delta^0 e^{\lambda j_{\epsilon'}}$, for some $j_{\epsilon'}\in\Z, j_{\epsilon'}\leq 0$, such that the following holds for all $k\in\N$,
		\begin{equation}\label{point2}
		\ite\Delta^{k+1}\geq \ite \min\left(\Dp e^{\lambda W_{k+1}}, \deps\right), 
		\end{equation}
		where $W_{k+1}$ satisfies~\eqref{randomWalk} with $q>\frac{1}{2}$.
		\item[(iii)] There exists a nondecreasing function $h:[0,+\infty)\rightarrow (0,+\infty)$ and a constant $\eta>0$ such that 
		\begin{equation}\label{point3}
		\E{\Phi_{k+1}-\Phi_k\ |\ \mathcal{F}_k}\ite\leq -\eta h(\Dp)\ite.
		\end{equation}
	\end{itemize}
\end{assumption}
Note that as highlighted in~\cite{blanchet2019convergence,paquette2018stochastic}, Assumption~\ref{renewalReward} states that conditioned on the past, the nonnegative random sequence $\{\Phi_k\}_{k\in\N}$ decreases by at least $\eta h(\Dp)$ at each iteration provided that $T_{\epsilon'}>k$ and moreover, the sequence $\{\Dp\}_{k\in\N}$ has a tendency to increase whenever it is below some fixed threshold~$\deps$.

The following theorem providing a bound on $\E{T_{\epsilon'}}$ is proved in~\cite{blanchet2019convergence} by observing that the upward drift in the random walk $\{W_k\}_{k\in\N}$ makes the event $\{\Dk\geq\deps \}$ occur sufficiently frequently on average~\cite{blanchet2019convergence,paquette2018stochastic}. Hence, $\E{\Phi_{k+1}-\Phi_k}$ can frequently be bounded by some negative fixed constant, thus leading to a bound on the expected stopping time $\E{T_{\epsilon'}}$.

\begin{theorem}\label{boundRate} Let Assumption~\ref{renewalReward} hold. Then, 
	\begin{equation}
	\E{T_{\epr}}\leq \frac{q}{2q-1}\times \frac{\Phi_0}{\eta h(\deps)}+1 \nonumber
	\end{equation}
\end{theorem}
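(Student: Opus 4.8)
The plan is to follow the renewal-reward argument of Blanchet et al.\ and bound $\E{T_{\epr}}$ by controlling how often the step size sits above the threshold $\deps$. First I would introduce the renewal times for the biased random walk $\{W_k\}$: starting from $W_0=1$, let $R_0=0$ and define $R_{\ell+1}$ to be the first time after $R_\ell$ that $W_k$ returns to level $1$ (or, more precisely, the successive times at which the walk, reflected suitably, is at its running analogue of the barrier). Because $q>1/2$, the walk has positive drift, so each excursion below the barrier is finite a.s.\ and, crucially, the expected inter-renewal time is finite and controlled by $q/(2q-1)$ — this is the standard gambler's-ruin / Wald computation for an up-biased $\pm1$ walk. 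On the indicator event $\{T_{\epr}>k\}$, Assumption~\ref{renewalReward}-(ii) forces $\Dk \geq \min(\Delta^0 e^{\lambda W_k},\deps)$-type lower bounds to propagate, so that at every renewal time the step size has climbed back to at least $\deps$ (using also that $\delta_{\max}$ never truncates below $\deps$, since $j_{\epsilon'}\leq 0\leq j_{\max}$).

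Next I would combine this with the supermartingale-type decrease in Assumption~\ref{renewalReward}-(iii). Consider the stopped process $\Phi_{\min(k,T_{\epr})}$. On $\{T_{\epr}>k\}$ we have $\E{\Phi_{k+1}-\Phi_k\mid\mathcal{F}_k}\leq -\eta h(\Dp)$, and since $h$ is nondecreasing and $\Dp\geq\deps$ at renewal times, the conditional expected decrease at those particular steps is at least $\eta h(\deps)$. Summing the telescoped inequalities and taking expectations, the nonnegativity $\Phi_k\geq 0$ gives
\begin{equation}
\Phi_0 \;\geq\; \E{\Phi_0 - \Phi_{\min(k,T_{\epr})}} \;\geq\; \eta\, h(\deps)\, \E{\#\{\text{renewals before } \min(k,T_{\epr})\}}. \nonumber
\end{equation}
Letting $k\to\infty$ (monotone convergence on the right) yields a bound on the expected number of renewals occurring before $T_{\epr}$, namely $\Phi_0/(\eta h(\deps))$.

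The final step converts a bound on the number of renewals into a bound on $\E{T_{\epr}}$ itself. Here I would use the finiteness of expected inter-renewal times: $T_{\epr}$ is at most the time of the $N$-th renewal where $N$ is the (random) number of renewals before $T_{\epr}$, so by Wald's identity (the renewal gaps being i.i.d.\ and $N+1$ being a stopping time for the gap sequence) $\E{T_{\epr}} \leq (\E{N}+1)\cdot \frac{q}{2q-1}$. Plugging in $\E{N}\leq \Phi_0/(\eta h(\deps))$ and the constant $q/(2q-1)$ for the mean gap — and being careful about the ``$+1$'' that accounts for the possibly incomplete final excursion and the initial offset — produces exactly
\begin{equation}
\E{T_{\epr}} \;\leq\; \frac{q}{2q-1}\cdot\frac{\Phi_0}{\eta\, h(\deps)} + 1. \nonumber
\end{equation}

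The main obstacle I anticipate is making the renewal bookkeeping rigorous: one must carefully define the renewal structure purely in terms of $\{W_k\}$ so that the inter-renewal times are genuinely i.i.d.\ and independent enough to apply Wald, while simultaneously ensuring that the lower bound $\Dp\geq\deps$ at renewal times is valid on the event $\{T_{\epr}>k\}$ given only the one-step inequality~\eqref{point2} (which mixes $\Delta$ and $W$). Reconciling the multiplicative dynamics of $\Dk$ with the additive random walk $W_k$ — essentially passing to $\log\Dk$ and tracking it against a reflected version of $W_k$ — and handling the $\min(\cdot,\deps)$ and $\min(\cdot,\dmx)$ truncations cleanly is the delicate part; everything else is the standard drift/optional-stopping machinery. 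Since the statement explicitly says this theorem ``is proved in~\cite{blanchet2019convergence}'', I would in fact cite that proof rather than reproduce it, and only sketch the renewal-reward idea as above.
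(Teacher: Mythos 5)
The paper offers no proof of this theorem: it is imported verbatim from \cite{blanchet2019convergence}, accompanied only by the intuition you sketch (the upward drift of $\{W_k\}$ makes the event $\{\Dk\geq\deps\}$ occur sufficiently often, so Assumption~\ref{renewalReward}-{\it (iii)} yields a frequent fixed expected decrease of $\Phi_k$, which bounds the expected stopping time). Your renewal--reward/Wald sketch is a faithful account of that cited argument, and your decision to defer the rigorous renewal bookkeeping to \cite{blanchet2019convergence} is exactly what the paper itself does, so the approach is essentially the same.
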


\section{Convergence rate analysis}\label{ExpSect4}
It follows from Section~\ref{ExpSect3} that Theorem~\ref{boundRate} holds for any stopping time $T_{\epr}$ defined with respect to the filtration $\{\mathcal{F}_k\}_{k\in\N}$, provided that Assumption~\ref{renewalReward} hold for the stochastic process $\{(\Phi_k,\Delta^k,W_k)\}_{k\in\N}$. Thus, the goal of the present section is to show how such a stochastic process satisfying Assumption~\ref{renewalReward} can be constructed in order to bound the expected number of iterations required by Algorithm~\ref{algomads} to achieve $\norminf{\nabla f(X^k)}\leq\epsilon$, for some arbitrary fixed $\epsilon\in (0,1)$, where $\norminf{\cdot}$ denotes the Euclidean norm of $\rn$ as in the remainder of the manuscript. 

\subsection{{\bld Analysis of the stochastic process generated by SDDS}}\label{ExpSub41}
In order to show that Assumption~\ref{renewalReward} holds, 
let impose the following standard assumption on the objective function~$f$.
\begin{assumption}\label{assumpOnf}	
	The function $f$ is bounded from below, i.e., there exists $f_{\min}\in\R$ such that $-\infty < f_{\min}\leq \fd(x)$,$\ $ for all $x\in\rn$. 
\end{assumption}

The following result generalizing that in~\cite{audet2019stomads} provides a bound on the expected decrease in the random function 
\begin{equation}\label{Phik}
\Phi_k:=\frac{\nu}{c\ef}(\fd(X^k)-f_{\min})+(1-\nu)(\Dk)^p.
\end{equation}




\begin{theorem}\label{zerothOrder} 
	Let Assumption~\ref{rho}, \ref{keyAssumption} and~\ref{assumpOnf} hold. Let $\gamma>2, p>1$ and $\tau\in(0,1)$. Let $\nu\in (0,1)$ and $\beta\in (1/2,1)$ be chosen such that 	
	\begin{equation}\label{nuchoice}
	\frac{\nu}{1-\nu}\geq \frac{2(\tau^{-p}-1)}{\gamma-2} \quad\text{and}\quad \frac{\beta}{{1-\beta}}\geq \frac{\nu}{1-\nu}\times\frac{4}{(1-\tau^p)},
	\end{equation}	
	Then the expected decrease in the random function $\Phi_k$ defined in~\eqref{Phik} satisfies
	\begin{equation}\label{expect}
	\E{\Phi_{k+1}-\Phi_k\ |\ \mathcal{F}^F_{k-1}}\leq -\frac{1}{2}\beta(1-\nu)(1-\tau^p)(\Dk)^p.
	\end{equation}
\end{theorem}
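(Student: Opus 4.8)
The plan is to condition on the event $J_k$ (good estimates) and its complement $\bar J_k$, and to split further within $J_k$ according to whether iteration $k$ is successful or unsuccessful. First I would write
\[
\Phi_{k+1}-\Phi_k=\frac{\nu}{c\ef}\bigl(f(X^{k+1})-f(X^k)\bigr)+(1-\nu)\bigl((\Delta^{k+1})^p-(\Delta^k)^p\bigr),
\]
and analyze each of the three regimes. On a good successful iteration, Proposition~\ref{prop1} (inequality~\eqref{successDecrease}) gives $f(X^{k+1})-f(X^k)\le-(\gamma-2)c\ef(\Delta^k)^p$, while the step-size update gives $(\Delta^{k+1})^p\le\tau^{-p}(\Delta^k)^p$, so $(\Delta^{k+1})^p-(\Delta^k)^p\le(\tau^{-p}-1)(\Delta^k)^p$; combining these, the first condition in~\eqref{nuchoice} is exactly what makes the total bounded above by a negative multiple of $(\Delta^k)^p$ — specifically I expect $-\tfrac12\nu(\gamma-2)(\Delta^k)^p$ or similar after using $\frac{\nu}{1-\nu}\ge\frac{2(\tau^{-p}-1)}{\gamma-2}$. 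On a good unsuccessful iteration, $f$ does not increase? — not necessarily, but the failure test $\fsk-\fok>-\gamma c\ef(\Delta^k)^p$ together with $\ef$-accuracy and inequality~\eqref{failure} bounds the possible increase by $(\gamma+2)c\ef(\Delta^k)^p$; meanwhile the step size shrinks, $(\Delta^{k+1})^p-(\Delta^k)^p=(\tau^p-1)(\Delta^k)^p<0$, and the decrease term $(1-\nu)(\tau^p-1)(\Delta^k)^p$ dominates, yielding an upper bound like $-\tfrac12(1-\nu)(1-\tau^p)(\Delta^k)^p$ after accounting for the $f$-increase via the chosen constants.

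Next I would handle the bad event $\bar J_k$. Here we have no control over the sign of $f(X^{k+1})-f(X^k)$, so I bound it crudely: on any iteration $|f(X^{k+1})-f(X^k)|\le|f(X^{k+1})-F^{k+1}_{\text{eval}}|+\cdots$, more precisely on a successful step $X^{k+1}=X^k+S^k$ and the quantity $f(X^k+S^k)-f(X^k)$ can be controlled in terms of $|\Fsk-f(X^k+S^k)|$ and $|\Fok-f(X^k)|$ plus the accepted algorithmic decrease $-\gamma c\ef(\Delta^k)^p$; on an unsuccessful step $f(X^{k+1})=f(X^k)$ so that term vanishes. Taking conditional expectations, the key is that $\E{\ijkc\,|\Fsk-f(X^k+S^k)|\,|\,\mathcal{F}^F_{k-1}}$ and $\E{\ijkc\,|\Fok-f(X^k)|\,|\,\mathcal{F}^F_{k-1}}$ are each at most $\ef(1-\beta)\rho(\Delta^k)=\ef(1-\beta)c(\Delta^k)^p$ by Lemma~\ref{keyLemma}. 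The step-size contribution on $\bar J_k$ is bounded by $(1-\nu)(\tau^{-p}-1)(\Delta^k)^p$. So the bad-event contribution is a positive multiple of $(1-\beta)(\Delta^k)^p$, and the second condition in~\eqref{nuchoice}, $\frac{\beta}{1-\beta}\ge\frac{\nu}{1-\nu}\cdot\frac{4}{1-\tau^p}$, is precisely calibrated so that this positive term is absorbed by (half of) the negative drift coming from the good event, leaving the clean bound $-\tfrac12\beta(1-\nu)(1-\tau^p)(\Delta^k)^p$.

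Finally I would assemble: writing $\E{\Phi_{k+1}-\Phi_k\,|\,\mathcal{F}^F_{k-1}}=\E{\ijk(\Phi_{k+1}-\Phi_k)\,|\,\mathcal{F}^F_{k-1}}+\E{\ijkc(\Phi_{k+1}-\Phi_k)\,|\,\mathcal{F}^F_{k-1}}$, using $\E{\ijk\,|\,\mathcal{F}^F_{k-1}}\ge\beta$ on the (negative) good-event bound and the Lemma~\ref{keyLemma} estimates on the bad-event term, and then invoking~\eqref{nuchoice} to collect constants. One subtlety to watch: the good-event bound must be stated as $\ijk$ times a deterministic (given $\mathcal{F}^F_{k-1}$) quantity before taking expectations, since $\Delta^k$ and $X^k$ are $\mathcal{F}^F_{k-1}$-measurable but the success/failure split is not — so I would argue pathwise that on $J_k$, regardless of which branch occurs, $\Phi_{k+1}-\Phi_k\le-\tfrac12(1-\nu)(1-\tau^p)(\Delta^k)^p$ using the first inequality in~\eqref{nuchoice} to make even the successful-branch bound at least this negative, and only then take conditional expectation to pull out the factor $\ge\beta$.

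\emph{The main obstacle} I anticipate is the bad-event bookkeeping: carefully relating $f(X^{k+1})-f(X^k)$ on a bad successful step to the accepted decrease $-\gamma c\ef(\Delta^k)^p$ plus the two noise terms — since the step $S^k$ is itself selected using the (bad) estimates, one must use the algorithmic acceptance test $\fsk-\fok\le-\gamma c\ef(\Delta^k)^p$ as a pathwise inequality that holds whether or not the estimates are accurate, and then bound $f(X^k+S^k)-f(X^k)=\bigl(f(X^k+S^k)-\Fsk\bigr)+(\Fsk-\Fok)+\bigl(\Fok-f(X^k)\bigr)\le|f(X^k+S^k)-\Fsk|+|\Fok-f(X^k)|-\gamma c\ef(\Delta^k)^p$, so that the negative algorithmic term can even help and the residual is exactly the two quantities controlled by Lemma~\ref{keyLemma}. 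Getting the constants in~\eqref{nuchoice} to line up with the factor $\tfrac12$ in~\eqref{expect} is then just careful arithmetic.
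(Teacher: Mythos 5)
Your overall strategy is the same as the paper's: split on good/bad estimates and successful/unsuccessful iterations, use the acceptance test pathwise plus Lemma~\ref{keyLemma} on the bad event, and invoke the two conditions in~\eqref{nuchoice}. However, there is a genuine flaw in your good-unsuccessful branch. On an unsuccessful iteration the candidate point is rejected and $X^{k+1}=X^k$, so the $f$-term of $\Phi_{k+1}-\Phi_k$ vanishes identically and $\Phi_{k+1}-\Phi_k=-(1-\nu)(1-\tau^p)(\Dk)^p$ exactly; there is no increase of $f$ at the incumbent to account for. Your argument instead allows an increase of $(\gamma+2)c\ef(\Dk)^p$ (the bound from~\eqref{failure}, which concerns the rejected trial point) and claims the step-size decrease dominates it ``via the chosen constants.'' That domination is false: the first condition in~\eqref{nuchoice} forces $\nu(\gamma+2)>\nu(\gamma-2)\geq 2(1-\nu)(\tau^{-p}-1)>(1-\nu)(1-\tau^p)$, so as written this branch would produce a positive bound and the proof would break there. (You in fact use the correct observation, $f(X^{k+1})=f(X^k)$ on failures, in your bad-estimate branch, so the fix is immediate, but the argument you give for the good case is not the right one.)

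A second, related problem is the constant bookkeeping. You commit to the pathwise bound $\Phi_{k+1}-\Phi_k\leq-\tfrac12(1-\nu)(1-\tau^p)(\Dk)^p$ on $J_k$; taking conditional expectations then gives a good-event drift of $-\tfrac{\beta}{2}(1-\nu)(1-\tau^p)(\Dk)^p$, while the second condition in~\eqref{nuchoice} only guarantees the bad-event contribution $2\nu(1-\beta)(\Dk)^p\leq\tfrac{\beta}{2}(1-\nu)(1-\tau^p)(\Dk)^p$. Summing yields merely $\E{\Phi_{k+1}-\Phi_k\,|\,\mathcal{F}^F_{k-1}}\leq 0$, not~\eqref{expect}. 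The paper avoids this by proving the full bound $\ijk(\Phi_{k+1}-\Phi_k)\leq-\ijk(1-\nu)(1-\tau^p)(\Dk)^p$ (equality on unsuccessful iterations, and on successful ones $-\tfrac12\nu(\gamma-2)\leq-(1-\nu)(1-\tau^p)$ since $1-\tau^p<\tau^{-p}-1$ and~\eqref{nuchoice} holds), so that half of $-\beta(1-\nu)(1-\tau^p)$ absorbs $2\nu(1-\beta)$ and the other half survives as the stated drift. Finally, in the bad successful case you should make explicit that the step-size increase $(1-\nu)(\tau^{-p}-1)(\Dk)^p$ is absorbed by the term $-\nu\gamma(\Dk)^p$ coming from the acceptance test (this is the paper's inequality $-\nu\gamma+(1-\nu)(\tau^{-p}-1)\leq 0$, a consequence of the first condition in~\eqref{nuchoice}); if it is instead carried along, the bad-event bound exceeds $2\nu(1-\beta)(\Dk)^p$ and the second condition in~\eqref{nuchoice} no longer closes the argument.
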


\begin{proof}
	The proof is almost identical to that in~\cite{audet2019stomads}, using ideas derived in~\cite{chen2018stochastic,LaBi2016,paquette2018stochastic} and making use of properties of the random function $\Phi_k$ defined in~\eqref{Phik}. It considers two separate cases: good estimates and bad estimates, each of which are broken into whether an iteration is successful or unsuccessful. Define the event $S$ by \[S:=\{\text{The iteration is successful} \}, \] and let $\bar{S}$ denote the complement of $S$.\\
	\textbf{Case 1 (Good estimates, $\ijk=1$)} The overall goal is to show that $\Phi_k$ decreases no matter what type of iteration occurs thus yielding the following bound
	\begin{equation}\label{concl1}
	\E{\ijk(\Phi_{k+1}-\Phi_k)\ |\ \mathcal{F}^F_{k-1}}\leq -\beta(1-\nu)(1-\tau^p)(\Dp)^p.
	\end{equation}
	\begin{itemize}
		\item[(i)] \textit{Successful iteration} $(\isucc=1)$. A decrease occurs in $f$ according to~\eqref{successDecrease} since estimates are good and the iteration is successful, thus implying that
		\begin{equation}
		\ijk\isucc\ \frac{\nu}{c\ef}(\fd(X^{k+1})-\fd(X^k))\leq -\ijk\isucc\nu(\gamma-2)(\Dk)^p.\label{aa1}
		\end{equation}
		The step size parameter is updated according to {\blc $\Delta^{k+1}= \min\{\tau^{-1}\Dp,\dmax\}$.} Hence, 
		\begin{equation}\label{aa2}
		\ijk\isucc(1-\nu)\left[(\Delta^{k+1})^p-(\Dp)^p\right]{\blc \leq}\ijk\isucc(1-\nu)(\tau^{-p}-1)(\Dp)^p.
		\end{equation}
		Then, choosing $\nu$ according to~\eqref{nuchoice} ensures that the right-hand side term  of~\eqref{aa1} dominates that of~\eqref{aa2}, i.e., 
		\begin{equation}
		-\nu(\gamma-2)(\Dp)^p+(1-\nu)(\tau^{-p}-1)(\Dp)^p \leq -\frac{1}{2}\nu(\gamma-2)(\Dp)^p.\label{A1}
		\end{equation}
		Thus, combining~\eqref{aa1}, \eqref{aa2} and~\eqref{A1} yields
		\begin{eqnarray}\label{A3}
		\ijk\isucc (\Phi_{k+1}-\Phi_k)\leq -\ijk\isucc \frac{1}{2}\nu(\gamma-2)(\Dp)^p.
		\end{eqnarray}
		\item[(ii)] \textit{Unsuccessful iteration} $(\isbar=1)$. 
		The step size parameter is decreased while there is a change of zero in function values since the iteration is unsuccessful. Thus,
		\begin{equation}
		\ijk\isbar(\Phi_{k+1}-\Phi_k)= - \ijk\isbar(1-\nu)(1-\tau^p)(\Dp)^p.\label{A5}
		\end{equation}
		
		Then, choosing $\nu$ according to~\eqref{nuchoice} and noticing that $1-\tau^p< \tau^{-p}-1$, ensure that unsuccessful iterations, specifically~\eqref{A5}, provide the worst case decrease when compared to~\eqref{A3}, i.e., the following holds
		\begin{equation}
		-\frac{1}{2}\nu(\gamma-2)(\Dp)^p\leq -(1-\nu)(1-\tau^p)(\Dp)^p.\label{A6}
		\end{equation}
		
		Thus, combining~\eqref{A3}, \eqref{A5}, and~\eqref{A6}, leads to the following bound on the change in $\Phi_k$
		\begin{equation}\label{A9}
		\ijk(\Phi_{k+1}-\Phi_k) = \ijk (\isucc+\isbar)(\Phi_{k+1}-\Phi_k)\leq - \ijk(1-\nu)(1-\tau^p)(\Dp)^p.
		\end{equation}
		Since Assumption~\ref{keyAssumption} holds, then taking conditional expectations with respect to $\mathcal{F}^F_{k-1}$ in both sides of~\eqref{A9} leads to~\eqref{concl1}.
	\end{itemize}
	\textbf{Case 2 (Bad estimates, $\ijkc = 1$).} Since the estimates are bad, an iterate leading to an increase in $f$ and $\Dp$, and hence in $\Phi_k$, can be accepted by Algorithm~\ref{algomads}. Such an increase in $\Phi_k$ is controlled by bounding the variance in function estimates, using~\eqref{keysk}. Then, in order to guarantee that $\Phi_k$ is sufficiently reduced in expectation, the probability of outcome is adjusted to be sufficiently small. The overall goal is to show that 
	\begin{equation}\label{concl2}
	\E{\ijkc(\Phi_{k+1}-\Phi_k)\ |\ \mathcal{F}^F_{k-1}}\leq 2\nu(1-\beta)(\Dp)^p.
	\end{equation}
	
	\begin{itemize}
		\item[(i)] \textit{Successful iteration} $(\isucc=1)$. The change in $f$ is bounded as follows
		\begin{eqnarray}
		\ijkc\isucc \frac{\nu}{c\ef}(\fd(&&\hspace{-1cm}X^{k+1})-\fd(X^k))\nonumber \\
		&\leq& \ijkc\isucc \frac{\nu}{c\ef}\left[(\Fsk -\Fok)+\abs{\fd(X^{k+1})-\Fsk}+\abs{\Fok-\fd(X^{k})}\right]\nonumber \\
		&\leq& \ijkc\isucc \nu \left[-\gamma(\Dp)^p+\frac{1}{c\ef}\left(\abs{\fd(X^{k+1})-\Fsk}+\abs{\Fok-\fd(X^{k})}\right) \right] \label{427}
		\end{eqnarray}
		where the last inequality in~\eqref{427} follows from the fact that $\Fsk -\Fok\leq -\gamma c\ef(\Dp)^p$ for successful iterations. Moreover, as in Case~1, $\Delta^{k+1}= \min\{\tau^{-1}\Dp,\dmax\}$ since the iteration is successful. Thus,
		\begin{equation}\label{aa22}
		\ijkc\isucc(1-\nu)\left[(\Delta^{k+1})^p-(\Dp)^p\right]{\leq}\ijkc\isucc(1-\nu)(\tau^{-p}-1)(\Dp)^p.
		\end{equation}
		Then, choosing $\nu$ according to~\eqref{nuchoice} yields
		\begin{equation}\label{opi}
		-\nu \gamma(\Dp)^p+(1-\nu)(\tau^{-p}-1)(\Dp)^p\leq 0.
		\end{equation}
		Thus, combining~\eqref{427}, \eqref{aa22} and~\eqref{opi} leads to
		\begin{eqnarray}\label{A10}
		\ijkc\isucc (\Phi_{k+1}-\Phi_k)\leq \ijkc\isucc\frac{\nu}{c\ef} (\abs{\fd(X^{k+1})-\Fsk}+\abs{\Fok-\fd(X^{k})})
		\end{eqnarray}
		\item[(ii)] \textit{Unsuccessful iteration} $(\isbar=1)$. $\Dp$ is decreased and the change in function values is zero. Thus, the bound in the change of $\Phi_k$ follows straightforwardly from~\eqref{A5} by replacing $\ijk$ by $\ijkc$. More precisely, the following holds, 
		\begin{eqnarray}
		\ijkc\isbar(\Phi_{k+1}-\Phi_k)&=& - \ijkc\isbar(1-\nu)(1-\tau^p)(\Dp)^p.\nonumber\\
		&\leq&\ijkc\isbar\frac{\nu}{c\ef} (\abs{\fd(X^{k+1})-\Fsk}+\abs{\Fok-\fd(X^{k})})    \label{A12}
		\end{eqnarray}
		Then, combining~\eqref{A10} and~\eqref{A12}, yields
		\begin{eqnarray}\label{A16}
		\ijkc (\Phi_{k+1}-\Phi_k)\leq \ijkc\frac{\nu}{c\ef}(\abs{\fd(X^{k+1})-\Fsk}+\abs{\Fok-\fd(X^{k})}).
		\end{eqnarray}
		Taking conditional expectations with respect to $\mathcal{F}^F_{k-1}$ in both sides of~\eqref{A16} and applying Lemma~\ref{keyLemma} leads to~\eqref{concl2}.
	\end{itemize}
	Now, combining expectations~\eqref{concl1} and~\eqref{concl2} leads to
	\begin{eqnarray}
	\E{\Phi_{k+1}-\Phi_k\ |\ \mathcal{F}^{F}_{k-1}}&=& \E{(\ijk+\ijkc)(\Phi_{k+1}-\Phi_k)\ |\ \mathcal{F}^{F}_{k-1}}\nonumber \\
	&\leq& \left[-\beta(1-\nu)(1-\tau^p)+ 2\nu (1-\beta)\right](\Dp)^p.\label{C1}
	\end{eqnarray}
	Then, choosing $\beta$ according to~\eqref{nuchoice} ensures that
	\begin{equation}\label{C2}
	-\beta(1-\nu)(1-\tau^p)+ 2\nu(1-\beta)\leq -\frac{1}{2}\beta(1-\nu)(1-\tau^p).
	\end{equation}
	Hence, \eqref{expect} follows from~\eqref{C1} and~\eqref{C2}, which achieves the proof.
\end{proof}

Summing both sides of~\eqref{expect} over $k\in\N$ and taking expectations with respect to $\mathcal{F}^F_{k-1}$ lead to the following result generalizing that in~\cite{audet2019stomads}, which shows in particular that the sequence $\{\Dk\}_{k\in\N}$ of step size parameters converges to zero with probability one.
\begin{theorem}
	Let all assumptions that were made in Theorem~\ref{zerothOrder} hold. Then, the sequence $\{\Dk\}_{k\in\N}$ of step size parameters generated by Algorithm~\ref{algomads} satisfies for $p>1$,
	\begin{equation}
	\sum_{k=0}^{+\infty}(\Dk)^p<+\infty\quad\text{almost surely.}  \nonumber
	\end{equation}
\end{theorem}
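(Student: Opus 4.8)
The plan is to turn the expected-decrease estimate~\eqref{expect} of Theorem~\ref{zerothOrder} into a telescoping bound, exploiting the fact that $\{\Phi_k\}_{k\in\N}$ is a nonnegative sequence behaving like a supermartingale with summable increments. First I would record three elementary facts about $\Phi_k$ as defined in~\eqref{Phik}. (1) $\Phi_k\geq 0$ for every $k$: since $\nu\in(0,1)$, Assumption~\ref{assumpOnf} gives $\fd(X^k)-f_{\min}\geq 0$, and $(\Dk)^p\geq 0$, so both terms of~\eqref{Phik} are nonnegative. (2) $\Phi_k$ is $\mathcal{F}^F_{k-1}$-measurable, because it is a deterministic function of $X^k$ and $\Dk$, both of which are $\mathcal{F}^F_{k-1}$-measurable by construction of Algorithm~\ref{algomads}; in particular $\Phi_0$ is a finite constant determined by $x^0$ and $\delta^0$. (3) Each $\Phi_k$ is integrable: rewriting~\eqref{expect} as $\E{\Phi_{k+1}\ |\ \mathcal{F}^F_{k-1}}\leq \Phi_k-\tfrac12\beta(1-\nu)(1-\tau^p)(\Dk)^p\leq\Phi_k$ and taking total expectations yields $\E{\Phi_{k+1}}\leq\E{\Phi_k}\leq\cdots\leq\Phi_0<+\infty$ by induction.

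Next I would take total expectations in~\eqref{expect} via the tower property $\E{\,\cdot\,}=\E{\E{\,\cdot\ |\ \mathcal{F}^F_{k-1}}}$, obtaining
\[
\E{\Phi_{k+1}}-\E{\Phi_k}\ \leq\ -\tfrac12\beta(1-\nu)(1-\tau^p)\,\E{(\Dk)^p},
\]
and sum this over $k=0,1,\dots,K$. The left-hand side telescopes to $\E{\Phi_{K+1}}-\Phi_0$, so, using $\E{\Phi_{K+1}}\geq 0$,
\[
\sum_{k=0}^{K}\E{(\Dk)^p}\ \leq\ \frac{2\bigl(\Phi_0-\E{\Phi_{K+1}}\bigr)}{\beta(1-\nu)(1-\tau^p)}\ \leq\ \frac{2\,\Phi_0}{\beta(1-\nu)(1-\tau^p)}.
\]
Letting $K\to+\infty$ gives $\sum_{k=0}^{+\infty}\E{(\Dk)^p}<+\infty$, the bound on the right being finite because $\beta\in(1/2,1)$, $\nu\in(0,1)$, $\tau\in(0,1)$, $p>1$ and $\Phi_0<+\infty$.

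Finally, since $(\Dk)^p\geq 0$ for all $k$, the monotone convergence theorem (Tonelli) permits interchanging the sum and the expectation, $\E{\sum_{k=0}^{+\infty}(\Dk)^p}=\sum_{k=0}^{+\infty}\E{(\Dk)^p}<+\infty$; a nonnegative random variable with finite expectation is almost surely finite, hence $\sum_{k=0}^{+\infty}(\Dk)^p<+\infty$ almost surely, which is the claim. In particular $(\Dk)^p\to 0$, so $\Dk\to 0$ with probability one.

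I do not anticipate a genuine obstacle: this is the standard ``nonnegative supermartingale with summable expected increments'' device. The only points deserving care are the bookkeeping ones --- verifying nonnegativity of $\Phi_k$ from Assumption~\ref{assumpOnf}, confirming $\mathcal{F}^F_{k-1}$-measurability (hence integrability) of $\Phi_k$ so that the conditional estimate~\eqref{expect} can be legitimately de-conditioned and telescoped, and invoking monotone convergence rather than naively swapping a limit with an expectation.
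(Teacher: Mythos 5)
Your argument is correct and is essentially the paper's own proof: the paper derives this theorem precisely by summing both sides of~\eqref{expect} and taking expectations, and your write-up simply carries out that telescoping supermartingale argument in detail (nonnegativity of $\Phi_k$, tower property, and monotone convergence to pass from summable expectations to almost sure summability). No gap; the extra bookkeeping you add is exactly what the paper leaves implicit.
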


Consider the  stochastic process $\{(\Phi_k,\Dp,W_k)\}_{k\in\N}$, where $\Phi_k$ is the same random function in Theorem~\ref{zerothOrder}, $\Dp$ is the random step size parameter and $W_k=2(\ijk-\frac{1}{2})$. Define $\hat{p}=\min(p-1,1)$ for some fixed $p>1$. For some arbitrary fixed $\epr\in (0,1)$, consider the following random time $T_{\epr}$ defined by

\begin{equation}\label{stopGrad}
T_{\epr}=\inf \left\lbrace k\in\N:\norminf{\nabla f(X^k)}^{1/\hat{p}}\leq \epr \right\rbrace
\end{equation}
Then, $T_\epr$ is a stopping time for the stochastic process generated by Algorithm~\ref{algomads} and is consequently a stopping time for  $\{(\Phi_k,\Dp,W_k)\}_{k\in\N}$~\cite{blanchet2019convergence,paquette2018stochastic}. Moreover, $T_{\epsilon^{1/\hat{p}}}$ is the number of iterations required by Algorithm~\ref{algomads} to drive the norm of the gradient of $f$ below $\epsilon\in (0,1)$. This latter remark will help to derive the main result of the present work in Theorem~\ref{mainResult}. 

In order to apply Theorem~\ref{boundRate} to $T_{\epr}$, the remainder of this section is devoted to showing that Assumption~\ref{renewalReward} holds for the previous stochastic process. First, notice that since Theorem~\ref{zerothOrder} holds without using any information about the existence of the gradient of $f$, then by multiplying both sides of~\eqref{expect} by $\ite$, Assumption~\ref{renewalReward}-{\it (iii)}, trivially holds with $\eta=\frac{1}{2}\beta(1-\nu)(1-\tau^p)$ and $h(x)=x^p$. By choosing $\lambda$ such that $e^{\lambda}=\tau^{-1}$ and noticing that $\Dp\leq \dmax=\delta^0 e^{\lambda j_{\max}}$ in Algorithm~\ref{algomads} for all $k\in\N$, then Assumption~\ref{renewalReward}-{\it (i)}  holds.

Then, before showing that Assumption~\ref{renewalReward}-{\it (ii)} also holds, let emphasize that as in the deterministic framework, polling directions in Algorithm~\ref{algomads} are chosen in such a way that their significant deterioration can be avoided asymptotically, i.e., in such a way to ensure that they never become close to loosing the positive spanning property~\cite{Vicente2013}. For this purpose, let recall the following definition of the {\it cosine measure}~\cite{CoScVibook,KoLeTo03a} of a positive spanning set $\mathbb{D}^k$ with non-zero vectors
\begin{equation}
\kappa(\mathbb{D}^k):=\underset{v\in\rn}{\min}\  \underset{d\in\mathbb{D}^k}{\max}\frac{v^{\top}d}{\norminf{v}\norminf{d}} \nonumber
\end{equation}
In order to avoid the aforementioned deterioration of polling directions, the positive spanning sets are required to satisfy the following assumption~\cite{KoLeTo03a,Vicente2013} where the size of the directions does not tend to infinite or approach zero, and the cosine measure always stays positive.
\begin{assumption}\label{koleto}
	The followings hold for all positive spanning sets $\mathbb{D}^k$ used for polling in Algorithm~\ref{algomads}. There exists a constant $\kappa_{\min}>0$ such that $\kappa(\mathbb{D}^k)>\kappa_{\min}\ $ for all $k$. There exist constants $d_{\min}>0$ and $d_{\max}>0$ such that $d_{\min}\leq\norminf{d}\leq d_{\max}$ for all $d\in \mathbb{D}^k$.
\end{assumption}
The following result from~\cite{KoLeTo03a} will be useful for the remaining of the analysis, and specifically the proof of the key result in Lemma~\ref{implication}. It shows by means of the cosine measure $\kappa(\mathbb{D}^k)$, how far can be in the worst case, the steepest descent direction, from the vector in $\mathbb{D}^k$ which makes the smallest angle with $v=-\nabla f(x^k)$. This means in term of descent that, there exists $d^k_*\in\mathbb{D}^k$ such that 
\begin{equation}\label{detoil}
\kappa(\mathbb{D}^k) \norminf{\nabla f(x^k)} \norminf{d^k_*} \leq -\nabla f(x^k)^{\top}d^k_*.
\end{equation} 

For the remaining of the analysis, 
the following standard assumption is also imposed on the gradient of $f$. 
\begin{assumption}\label{gradient}
	The gradient $\nabla f$ of the objective function $f$ is $L$-Lipschitz continuous everywhere.
\end{assumption} 
Then, define the constant $\deps$ as follows
\begin{equation}\label{seven}
\deps =\frac{\epr}{\zeta} \quad\text{with}\quad \zeta > \left[\kappa_{\min}^{-1}\left(Ld_{\max}+(\gamma+2)c\ef d_{\min}^{-1}\right)\right]^{1/\hat{p}},
\end{equation}
where without loss of generality, $Ld_{\max}>\kappa_{\min}$ so that $\deps<1$ for the needs of the analysis and specifically, the proof of Lemma~\ref{implication}. Then following~\cite{blanchet2019convergence}, it can be assumed without any loss of generality that $\deps=\tau^{-i}\delta^0$, for some integer $i\leq 0$. Hence, for any $k$, $\Dp=\tau^{i_k}\deps$, for some integer~$i_k$. Thus, what remains to be proved in Assumption~\ref{renewalReward} in order to apply Theorem~\ref{boundRate} is the dynamics~\eqref{point2}.
Note however that the proof of the dynamics~\eqref{point2} which will be achieved in Lemma~\ref{dynamics3}, need the following intermediate key result. 
Indeed, in the stochastic trust region framework of~\cite{blanchet2019convergence}, the proof of a similar dynamics strongly relies on the fact that any iteration $k$, where $\norme{\nabla f(x_k)}>\epsilon$ and for which ``{\it good}'' model and estimates occur, is successful provided that the {\it trust region radius} $\delta_k$ is bellow a threshold $\Delta_{\epsilon}$. Nevertheless, unlike the trust region framework where informations can possibly easily be derived on the true gradient $\nabla f(x^k)$ using those provided by the {\it gradient estimate} $g_k$, the algorithmic framework of the present work does not use any gradient information. Thus, the main challenge in proving that Assumption~\ref{renewalReward}-{\it (ii)} holds, lies in linking the event $\accolade{\norminf{\nabla f(X^k)}^{1/\hat{p}}>\epr}$ to a successful iteration of Algorithm~\ref{algomads}, which is done next.

\begin{lemma}\label{implication} Assume that Assumption~\ref{gradient} and~\ref{koleto} hold and that $\dpl\leq\deps$. Let $f_0^k$ and $f_s^k$ be $\ef$-accurate estimates of $f(x^k)$ and $f(x^k+s^k)$ respectively. If $\ \norminf{\nabla f(x^k)}^{1/\hat{p}}>\epr$, then 
	\begin{equation}\label{firstbound}
	\fsk-\fok\leq -\gamma c\ef(\dpl)^p.\nonumber
	\end{equation}
	In particular, this means that the iteration $k$ of Algorithm~\ref{algomads} is successful.
\end{lemma}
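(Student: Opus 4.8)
The plan is to produce a single poll direction along which the success test of Algorithm~\ref{algomads} is triggered; since success only requires $\fsk-\fok\leq-\gamma c\ef(\dpl)^p$ for \emph{some} $s^k=\dpl d^k$ with $d^k\in\mathbb{D}^k$, exhibiting one such direction proves the claim. First I would peel off the estimate errors via the decomposition $\fsk-\fok=\bigl(\fsk-f(x^k+s^k)\bigr)+\bigl(f(x^k+s^k)-f(x^k)\bigr)+\bigl(f(x^k)-\fok\bigr)$ and apply $\ef$-accuracy (Definition~\ref{accurate} with $\rho(t)=ct^p$) to the first and third brackets, which yields $\fsk-\fok\leq f(x^k+s^k)-f(x^k)+2c\ef(\dpl)^p$. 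Hence it suffices to find a direction along which the \emph{true} objective decreases by strictly more than $(\gamma+2)c\ef(\dpl)^p$.

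For that direction I would invoke~\eqref{detoil}: there is $d^k_*\in\mathbb{D}^k$ with $\kappa(\mathbb{D}^k)\norminf{\nabla f(x^k)}\norminf{d^k_*}\leq-\nabla f(x^k)^{\top}d^k_*$. Setting $s^k=\dpl d^k_*$ and using the descent lemma for $L$-Lipschitz gradients (Assumption~\ref{gradient}), $f(x^k+s^k)-f(x^k)\leq \dpl\nabla f(x^k)^{\top}d^k_*+\tfrac{L}{2}(\dpl)^2\norminf{d^k_*}^2$; then~\eqref{detoil} together with $\kappa(\mathbb{D}^k)>\kappa_{\min}$ and $\norminf{d^k_*}\leq d_{\max}$ (Assumption~\ref{koleto}) let me factor out $\dpl\norminf{d^k_*}$ and write $f(x^k+s^k)-f(x^k)\leq\dpl\norminf{d^k_*}\bigl(-\kappa_{\min}\norminf{\nabla f(x^k)}+\tfrac{L}{2}\dpl d_{\max}\bigr)$. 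The reason not to bound $\norminf{d^k_*}$ inside the quadratic term prematurely is that only $Ld_{\max}$, not $d_{\max}^2$, then has to be controlled, which is exactly what the threshold for $\zeta$ in~\eqref{seven} provides.

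The crux is bounding that bracket using the two hypotheses. From $\norminf{\nabla f(x^k)}^{1/\hat{p}}>\epr$ and $\dpl\leq\deps=\epr/\zeta$ I get $\norminf{\nabla f(x^k)}>(\epr)^{\hat{p}}\geq\zeta^{\hat{p}}(\dpl)^{\hat{p}}$, and the choice of $\zeta$ in~\eqref{seven} gives $\kappa_{\min}\norminf{\nabla f(x^k)}>\bigl(Ld_{\max}+(\gamma+2)c\ef d_{\min}^{-1}\bigr)(\dpl)^{\hat{p}}$. Since $\deps<1$ (the normalization $Ld_{\max}>\kappa_{\min}$ forces $\zeta>1$, hence $\deps=\epr/\zeta<1$) and $\hat{p}=\min(p-1,1)\leq1$, one has $(\dpl)^{\hat{p}}\geq\dpl$, so the $Ld_{\max}$ part of the bound absorbs $\tfrac{L}{2}\dpl d_{\max}$ and the bracket is $<-(\gamma+2)c\ef d_{\min}^{-1}(\dpl)^{\hat{p}}$; multiplying by $\dpl\norminf{d^k_*}\geq\dpl d_{\min}$ (legitimate because the bracket is negative) gives $f(x^k+s^k)-f(x^k)<-(\gamma+2)c\ef(\dpl)^{1+\hat{p}}$. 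Finally $1+\hat{p}=\min(p,2)$, so using $\dpl<1$ once more when $p>2$ one has $(\dpl)^{1+\hat{p}}\geq(\dpl)^{p}$; combining with the accuracy bound of the first paragraph produces $\fsk-\fok<-\gamma c\ef(\dpl)^p$, i.e.\ iteration $k$ is successful.

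Conceptually this is exactly the deterministic generating-set-search argument of~\cite{KoLeTo03a,Vicente2013} — one direction of a positive spanning set makes a uniformly acute angle with $-\nabla f(x^k)$, so a short enough step along it forces sufficient decrease — transplanted to the noisy setting through the $\ef$-accuracy buffer $2c\ef(\dpl)^p$. I expect the only real difficulty to be the exponent bookkeeping: keeping $\dpl\norminf{d^k_*}$ as a common factor until the sign of the bracket is settled, and carefully using $\deps<1$ with $\hat{p}=\min(p-1,1)$ to pass from the natural exponent $1+\hat{p}$ to the target exponent $p$ in both regimes $1<p\leq2$ and $p>2$.
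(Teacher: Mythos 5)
Your proposal is correct and is essentially the paper's own argument run forward: the paper proves the contrapositive (assuming $\fsk-\fok>-\gamma c\ef(\dpl)^p$, it combines the mean value theorem, the Lipschitz bound and~\eqref{detoil} to force $\norminf{\nabla f(x^k)}^{1/\hat{p}}\leq\epr$, a contradiction), whereas you use the same accuracy decomposition, the same direction $d^k_*$, the same constant $\kappa_{\min}^{-1}\left(Ld_{\max}+(\gamma+2)c\ef d_{\min}^{-1}\right)$ from~\eqref{seven}, and the same exponent bookkeeping with $\deps<1$ and $\hat{p}=\min(p-1,1)$, only replacing the mean value theorem by the descent lemma. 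The differences are purely presentational, so your proof stands as a valid variant of the paper's.
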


\begin{proof} The proof uses elements derived in~\cite{KoLeTo03a}. Suppose that $\dpl\leq\deps$ and assume in contradiction that $\fsk-\fok>-\gamma c\ef(\dpl)^p$. Since the estimates $f_0^k$ and $f_s^k$ are $\ef$-accurate, then it follows from the following equality
	\begin{equation}
	f(x^k+s^k)-f(x^k)=f(x^k+s^k)-\fsk +(\fsk-\fok)+\fok-f(x^k) \nonumber
	\end{equation}
	that
	\begin{equation}\label{sup}
	f(x^k+s^k)-f(x^k)+(\gamma+2)c\ef(\dpl)^p\geq 0.
	\end{equation}
	Recall that $s^k=\dk d$ where $d\in\mathbb{D}^k$ denotes any direction used by Algorithm~\ref{algomads} at iteration $k$. It follows from the mean value theorem, combined with~\eqref{sup}, that there exists a constant $\mu_k\in\left[0,1\right]$ such that 
	\begin{equation}\label{meanIneq}
	0\leq \dk \nabla f(x^k+\mu_k\dk d^k_*)^{\top}d^k_*+(\gamma+2)c\ef (\dpl)^p,
	\end{equation}
	where $d^k_*$ is the direction satisfying~\eqref{detoil}. Dividing both sides of~\eqref{meanIneq} by $\dk$ and subtracting $\nabla f(x^k)^{\top}d^k_*$, yields 
	\begin{equation}\label{mean2}
	-\nabla f(x^k)^{\top}d^k_*\leq \left[\nabla f(x^k+\mu_k\dk d^k_*)- \nabla f(x^k)\right]^{\top}d^k_*+(\gamma+2)c\ef (\dpl)^{p-1}.
	\end{equation}
	Putting~\eqref{detoil} and~\eqref{mean2} together, yields 
	\begin{equation}\label{mean3}
	\kappa(\mathbb{D}^k) \norminf{\nabla f(x^k)} \norminf{d^k_*}\leq \left[\nabla f(x^k+\mu_k\dk d^k_*)- \nabla f(x^k)\right]^{\top}d^k_*+(\gamma+2)c\ef (\dpl)^{p-1}.
	\end{equation}
	Then, dividing both sides of~\eqref{mean3} by $\kappa(\mathbb{D}^k)\norminf{d^k_*}$ and using Assumption~\ref{gradient} and~\ref{koleto}, lead to
	\begin{eqnarray}
	\norminf{\nabla f(x^k)} &\leq& \kappa_{\min}^{-1}\left[Ld_{\max}\dk+(\gamma+2)c\ef d_{\min}^{-1}(\dpl)^{p-1} \right]\nonumber \\
	&\leq& \kappa_{\min}^{-1}\left(Ld_{\max}+(\gamma+2)c\ef d_{\min}^{-1}\right)(\dpl)^{\min (p-1,1)},\label{last}
	\end{eqnarray}
	where the inequality~\eqref{last} follows from the fact that $\dpl\leq\deps<1$.
	Now, recall that $\hat{p}=\min (p-1,1)$ and let $L_1:=\kappa_{\min}^{-1}\left(Ld_{\max}+(\gamma+2)c\ef d_{\min}^{-1}\right)$. Then, it follows from~\eqref{last} that 
	\begin{equation}\label{lst}
	\norminf{\nabla f(x^k)}^{1/\hat{p}}\leq L_1^{1/\hat{p}}\dk\leq L_1^{1/\hat{p}}\deps=L_1^{1/\hat{p}}\frac{\epr}{\zeta}\leq \epr, 
	\end{equation}
	where the last inequality in~\eqref{lst} follows from~\eqref{seven}, which achieves the proof.
\end{proof}
Finally, the following result shows that the dynamics~\eqref{point2} of Assumption~\ref{renewalReward}-{\it (ii)} holds.

\begin{lemma}\label{dynamics3}
	Let Assumption~\ref{gradient} and all assumptions that were made in Theorem~\ref{zerothOrder} hold. Then Assumption~\ref{renewalReward}-{\it (ii)} is satisfied for the random variable $W_k=2(\ijk-\frac{1}{2})$, $\lambda=-\ln(\tau)$ and $q=\beta$.
\end{lemma}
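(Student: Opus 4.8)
The plan is to certify the three items of Assumption~\ref{renewalReward} for the triple $(\Phi_k,\Dp,W_k)$ with $W_k=2(\ijk-\tfrac12)$, $\lambda=-\ln\tau$ and $q=\beta$. Items~\textit{(i)} and~\textit{(iii)} are already available from the discussion preceding the lemma: \textit{(iii)} is exactly~\eqref{expect} of Theorem~\ref{zerothOrder} multiplied by $\ite$, with $\eta=\tfrac12\beta(1-\nu)(1-\tau^p)>0$ and $h(x)=x^p$ nondecreasing; \textit{(i)} holds because $e^{\lambda}=\tau^{-1}$ makes $\dmax=\delta^0e^{\lambda j_{\max}}$ and $\Dp\le\dmax$ agree with the bounds enforced in Algorithm~\ref{algomads}. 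So the work reduces to (a) confirming that $\{W_k\}$ is an admissible biased walk and (b) establishing the dynamics~\eqref{point2}. For (a) I would note that $W_k\in\{-1,1\}$, that it is adapted to $\mathcal F_k=\sigma\{(\Phi_j,\Delta^j,W_j):j\le k\}$, and that, since this $\sigma$-field is contained in $\mathcal F^F_k$, Assumption~\ref{keyAssumption}-\textit{(i)} together with the tower property yields $\pr{W_{k+1}=1\mid\mathcal F_k}\ge\beta=q>\tfrac12$; as in~\cite{blanchet2019convergence,paquette2018stochastic}, replacing the equality in~\eqref{randomWalk} by ``$\ge$'' only reinforces the upward drift and leaves Theorem~\ref{boundRate} intact. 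Throughout, $W_{k+1}=1$ is read as the indicator that the estimates $(\Fok,\Fsk)$ steering the transition out of $X^k$ are $\ef$-accurate (the event $J_k$), which is the customary one-step identification between the algorithm's counter and the renewal-reward time index.

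For (b), fix $k$; on the complement of $\{T_{\epr}>k\}$ inequality~\eqref{point2} is vacuous, so I work on $\{T_{\epr}>k\}$. I would first record the elementary consequences of $\lambda=-\ln\tau$: $e^{\lambda}=\tau^{-1}>1$, $e^{-\lambda}=\tau\in(0,1)$; $\deps=\tau^{-i}\delta^0$ with integer $i\le0$ and $\dmax=\tau^{-j_{\max}}\delta^0$ with $j_{\max}\in\N$, so $\deps\le\delta^0\le\dmax$; and, by the grid structure already set up, $\Dp=\tau^{i_k}\deps$ for some integer $i_k$. I would also recall the update rules: a successful iteration $k$ gives $\Delta^{k+1}=\min\{\tau^{-1}\Dp,\dmax\}\ge\Dp$ (using $\Dp\le\dmax$), an unsuccessful one gives $\Delta^{k+1}=\tau\Dp$; in particular $\Delta^{k+1}\ge\tau\Dp=\Dp e^{-\lambda}$ always.

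Then I would split into three cases. If $\Dp>\deps$, then $i_k\le-1$, so on a failure $\Delta^{k+1}=\tau\Dp=\tau^{i_k+1}\deps\ge\deps$, and on a success $\Delta^{k+1}\ge\Dp>\deps$; either way $\Delta^{k+1}\ge\deps\ge\min\{\Dp e^{\lambda W_{k+1}},\deps\}$, with no extra hypotheses. If $\Dp\le\deps$ and $W_{k+1}=-1$, then $\Dp e^{\lambda W_{k+1}}=\tau\Dp$ and $\Delta^{k+1}\ge\tau\Dp\ge\min\{\tau\Dp,\deps\}$, again unconditionally. The only case that consumes $\{T_{\epr}>k\}$ is $\Dp\le\deps$ with $W_{k+1}=1$: there the estimates at iteration $k$ are $\ef$-accurate, $\norminf{\nabla f(X^k)}^{1/\hat{p}}>\epr$, and $\Dp\le\deps$, so Lemma~\ref{implication} forces iteration $k$ to be successful; hence $\Delta^{k+1}=\min\{\tau^{-1}\Dp,\dmax\}\ge\min\{\tau^{-1}\Dp,\deps\}=\min\{\Dp e^{\lambda W_{k+1}},\deps\}$, using $\dmax\ge\deps$. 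Multiplying each case by $\ite$ and taking the union delivers~\eqref{point2}, which finishes Assumption~\ref{renewalReward}.

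The part I expect to require care is purely bookkeeping: pinning down which iteration's accuracy event is the $W_{k+1}=1$ that governs the transition $\Dp\mapsto\Delta^{k+1}$, so that both $\pr{W_{k+1}=1\mid\mathcal F_k}\ge\beta$ and Lemma~\ref{implication} speak about the same iteration, and confirming that the submartingale-type bound ``$\ge\beta$'' in place of an exact transition probability is still compatible with Theorem~\ref{boundRate} — both points being handled exactly as in~\cite{blanchet2019convergence,paquette2018stochastic}. The genuinely nontrivial analytic ingredient, turning ``$\norminf{\nabla f(X^k)}$ not yet below threshold'' into ``iteration $k$ is successful'' without any gradient information, has already been isolated in Lemma~\ref{implication} and is only invoked here.
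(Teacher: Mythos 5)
Your proof is correct and follows essentially the same route as the paper: split on $\ite$, handle $\Dp>\deps$ via the grid structure, use $\Delta^{k+1}\geq\tau\Dp$ when estimates are bad, and invoke Lemma~\ref{implication} to force a successful iteration when $\Dp\leq\deps$, the estimates are good and $\norminf{\nabla f(X^k)}^{1/\hat{p}}>\epr$, concluding with $\pr{J_k\,|\,\mathcal{F}^F_{k-1}}\geq\beta=q$. Your explicit one-step identification of $W_{k+1}$ with the indicator of $J_k$ and the tower-property remark merely make precise a bookkeeping point the paper leaves implicit, so no substantive difference.
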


\begin{proof}
	The result is proved by adapting the proof of a similar Lemma  from~\cite{blanchet2019convergence}. First, notice that~\eqref{point2} trivially holds when $\ite=0$. Thus, the remaining of the proof is devoted to showing that conditioned on the event $\{T_{\epsilon'}>k\}$, i.e., when $\ite=1$, then the following holds
	\begin{equation}
	\Delta^{k+1}\geq\min\left\lbrace \deps, \min\left\lbrace \tau^{-1}\Dp, \dmax \right\rbrace \ijk+\tau\Dp\ijkc \right\rbrace.
	\end{equation}
	Notice that every realization such that $\dpl>\deps$ also satisfies $\dpl\geq\tau^{-1}\deps$ whence $\delta^{k+1}\geq\tau\dpl\geq\deps$. Now, assume that $\dpl\leq\deps$. Since $T_{\epsilon'}>k$, then it is the case that $\norminf{\nabla f(x^k)}^{1/\hat{p}}>\epsilon'$. If $\ijk=1$, then the estimates are good and are specifically $\ef$-accurate. Hence, it follows from Lemma~\ref{implication} that the $k$th iteration is successful. Thus, $x^{k+1}=x^k+s^k$ and $\delta^{k+1}=\min\left\lbrace \tau^{-1}\dpl,\dmax \right\rbrace$. But if $\ijk=0$, i.e., $\ijkc=1$, then the inequality $\delta^{k+1}\geq \tau\dpl$  always holds by the dynamics of Algorithm~\ref{algomads}. The proof is complete by noticing finally that $\pr{J_k|\mathcal{F}_{k-1}^F}\geq q=\beta$.
\end{proof}

\subsection{Complexity result and first-order optimality conditions}\label{ExpSub42}

The following result provides a bound on the expected number of iterations taken by Algorithm~\ref{algomads} before $\left\lbrace \norminf{\nabla f(X^k)}\leq\epsilon \right\rbrace$ occurs and is the main result of the present work.

\begin{theorem}\label{mainResult}
	Let Assumption~\ref{gradient} and all assumptions that were made in Theorem~\ref{zerothOrder} hold with $\beta\in(1/2,1)$ and $\nu\in(0,1)$ satisfying~\eqref{nuchoice}. Consider Algorithm~\ref{algomads} and the corresponding stochastic process. For some arbitrary fixed $\epsilon\in (0,1)$, consider the random time $T^{\star}_{\epsilon}$ defined by
	\begin{equation}\label{stopeps}
	T^{\star}_{\epsilon}=\inf \left\lbrace k\in\N:\norminf{\nabla f(X^k)}\leq \epsilon \right\rbrace.
	\end{equation}
	Then, 
	\begin{equation}\label{mResult}
	\E{T^{\star}_{\epsilon}}\leq \frac{2\Phi_0 L_2}{(2\beta-1)(1-\nu)(1-\tau^p)}\epsilon^{\frac{-p}{\min(p-1,1)}}+1,
	\end{equation}
	where $L_2:=\left[1+ \kappa_{\min}^{-1}\left(Ld_{\max}+(\gamma+2)c\ef d_{\min}^{-1}\right)\right]^{\frac{p}{\min(p-1,1)}}$.
	i.e., the expected number of iterations taken by Algorithm~\ref{algomads} to reduce the gradient below $\epsilon\in (0,1)$ is bounded in $\mathcal{O}\left(\epsilon^{\frac{-p}{\min(p-1,1)}}/(2\beta-1)\right)$.
\end{theorem}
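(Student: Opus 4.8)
The plan is to obtain~\eqref{mResult} as a direct consequence of Theorem~\ref{boundRate}, applied to the stochastic process $\{(\Phi_k,\Dp,W_k)\}_{k\in\N}$ introduced just after Theorem~\ref{zerothOrder}, with the stopping time $T_{\epr}$ of~\eqref{stopGrad} specialized to $\epr=\epsilon^{1/\hat{p}}$. First I would record the identification $T^{\star}_{\epsilon}=T_{\epsilon^{1/\hat{p}}}$: since $\hat{p}=\min(p-1,1)>0$ and $t\mapsto t^{1/\hat{p}}$ is strictly increasing on $[0,+\infty)$, the inequality $\norminf{\nabla f(X^k)}\leq\epsilon$ holds if and only if $\norminf{\nabla f(X^k)}^{1/\hat{p}}\leq\epsilon^{1/\hat{p}}$, so comparing~\eqref{stopGrad} with~\eqref{stopeps} gives equality of the two random times; in particular $T^{\star}_{\epsilon}$ is a stopping time for $\{(\Phi_k,\Dp,W_k)\}_{k\in\N}$, exactly as noted below~\eqref{stopGrad}.

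Next I would assemble the verifications of Assumption~\ref{renewalReward} already carried out in Section~\ref{ExpSub41}: multiplying~\eqref{expect} by $\ite$ gives part~(iii) with $\eta=\tfrac{1}{2}\beta(1-\nu)(1-\tau^p)$ and $h(x)=x^p$; part~(i) holds with $\lambda=-\ln\tau$ and $\dmax=\delta^0 e^{\lambda j_{\max}}$ by the update rules of Algorithm~\ref{algomads}; and Lemma~\ref{dynamics3} establishes part~(ii) with $W_k=2(\ijk-\tfrac{1}{2})$, $\lambda=-\ln\tau$, $q=\beta$, and $\deps$ as in~\eqref{seven}. Hence Theorem~\ref{boundRate} applies to $T_{\epr}$ for every $\epr\in(0,1)$, and in particular to $\epr=\epsilon^{1/\hat{p}}$.

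It then remains to substitute and simplify. Taking $\zeta=\bigl[1+\kappa_{\min}^{-1}\bigl(Ld_{\max}+(\gamma+2)c\ef d_{\min}^{-1}\bigr)\bigr]^{1/\hat{p}}$, which satisfies the strict lower bound demanded in~\eqref{seven}, yields $\deps=\epsilon^{1/\hat{p}}/\zeta$, hence $h(\deps)=\deps^{\,p}=\zeta^{-p}\epsilon^{p/\hat{p}}$ with $\zeta^{p}=L_2$. Plugging $\eta$, $h(\deps)$, and $q=\beta$ into Theorem~\ref{boundRate} gives
\[
\E{T^{\star}_{\epsilon}}=\E{T_{\epsilon^{1/\hat{p}}}}\leq\frac{\beta}{2\beta-1}\cdot\frac{\Phi_0}{\eta\,h(\deps)}+1=\frac{\beta}{2\beta-1}\cdot\frac{2\Phi_0\,\zeta^{p}}{\beta(1-\nu)(1-\tau^p)\,\epsilon^{p/\hat{p}}}+1 ,
\]
and cancelling $\beta$, using $\zeta^{p}=L_2$ and $\hat{p}=\min(p-1,1)$, produces exactly~\eqref{mResult}; reading off the leading order gives the $\mathcal{O}\bigl(\epsilon^{-p/\min(p-1,1)}/(2\beta-1)\bigr)$ rate.

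The only point requiring a little care is the compatibility of $\deps$ with the mesh structure: Lemma~\ref{dynamics3} and Theorem~\ref{boundRate} implicitly use $\deps=\tau^{-i}\delta^0$ for some integer $i\leq 0$ (so that $\Dp=\tau^{i_k}\deps$ with $i_k$ integer), which, as explained before Lemma~\ref{implication} following~\cite{blanchet2019convergence}, may be assumed without loss of generality by enlarging $\zeta$ slightly; this can only increase the constant $L_2$ and leaves the stated rate unchanged. Beyond that, the argument is pure bookkeeping, so I do not anticipate any genuine obstacle — the substantive work has already been done in Theorem~\ref{zerothOrder} and in Lemmas~\ref{implication} and~\ref{dynamics3}.
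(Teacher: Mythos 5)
Your proposal is correct and follows essentially the same route as the paper: identify $T^{\star}_{\epsilon}=T_{\epsilon^{1/\hat{p}}}$, invoke Theorem~\ref{boundRate} with $q=\beta$, $h(x)=x^p$, $\eta=\tfrac{1}{2}\beta(1-\nu)(1-\tau^p)$ and $\deps=\epsilon^{1/\hat{p}}/\zeta$, then choose $\zeta^p=L_2$ to obtain~\eqref{mResult}. Your extra remark on rounding $\deps$ to the form $\tau^{-i}\delta^0$ matches the paper's without-loss-of-generality convention stated before Lemma~\ref{implication}, so there is no gap.
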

\begin{proof}
	As shown previously, since Assumption~\ref{renewalReward} holds for the stochastic process $\{(\Phi_k,\Dp,W_k)\}_{k\in\N}$ generated by Algorithm~\ref{algomads}, with $q=\beta$, $h(x)=x^p$, $\eta=\frac{1}{2}\beta(1-\nu)(1-\tau^p)$ and $\deps=\epr/\zeta$, then Theorem~\ref{boundRate} applies for the stopping time $T_\epr$ defined in~\eqref{stopGrad}. Thus, the following inequality holds for all $\epr\in (0,1)$
	\begin{equation}\label{hhj}
	\E{T_{\epr}}\leq \frac{\beta}{2\beta-1}\times \frac{\Phi_0 \zeta^p}{\eta \epr^p}+1,
	\end{equation}
	where $\zeta^p > \left[\kappa_{\min}^{-1}\left(Ld_{\max}+(\gamma+2)c\ef d_{\min}^{-1}\right)\right]^{p/\hat{p}}$ thanks to~\eqref{seven}, with $\hat{p}=\min(p-1,1)$. Now, let $\epsilon\in(0,1)$ be arbitrary fixed. Then, $\epsilon^{1/\hat{p}}\in (0,1)$, which means that~\eqref{hhj} holds in particular for $\epr=\epsilon^{1/\hat{p}}$. By noticing moreover that $T_{\epsilon^{1/\hat{p}}}=T^{\star}_{\epsilon}$, then it follows from~\eqref{hhj} that 
	\begin{equation}\label{final}
	\E{T^{\star}_{\epsilon}}\leq \frac{\beta}{2\beta-1}\times \frac{2\Phi_0 \zeta^p}{\beta(1-\nu)(1-\tau^p)}\epsilon^{-p/\hat{p}}+1.
	\end{equation}
	Since $p/\hat{p}=p/\min(p-1,1)\geq 2$ for all $p>1$, then~\eqref{mResult} results from~\eqref{final} by choosing $\zeta^p$ according to $\zeta^p=\left[1+ \kappa_{\min}^{-1}\left(Ld_{\max}+(\gamma+2)c\ef d_{\min}^{-1}\right)\right]^{p/\hat{p}}$, which achieves the proof.
\end{proof}

The following $\liminf$-type first-order necessary optimality condition is a simple consequence of the complexity result of Theorem~\ref{mainResult}. It shows the existence of a subsequence of random iterates generated by Algorithm~\ref{algomads} which drives the norm of the gradient of $f$ to zero with probability one. Note that a similar corollary has been derived in~\cite{paquette2018stochastic}.
\begin{theorem}\label{liminf}
	Let Assumption~\ref{gradient} and all assumptions that were made in Theorem~\ref{zerothOrder} hold. Then the sequence $\{X^k\}_{k\in\N}$ of random iterates generated by Algorithm~\ref{algomads} satisfies
	\begin{equation}\label{liminfGrad}
	\underset{k\to+\infty}{\liminf}\norminf{\nabla f\left(X^k\right)}=0\quad\text{almost surely}.
	\end{equation}
\end{theorem}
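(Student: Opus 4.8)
The plan is to obtain Theorem~\ref{liminf} as an almost immediate corollary of the complexity bound of Theorem~\ref{mainResult}. First I would turn the bound on $\E{T^{\star}_{\epsilon}}$ into an almost-sure finiteness statement: a nonnegative random variable with finite expectation is finite almost surely, so $\pr{T^{\star}_{\epsilon} < +\infty} = 1$ for every fixed $\epsilon \in (0,1)$. In words, for each prescribed accuracy $\epsilon$, with probability one Algorithm~\ref{algomads} produces at least one iterate $X^k$ with $\norminf{\nabla f(X^k)} \le \epsilon$.

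Next I would make this uniform over a countable family of thresholds. Taking $\epsilon = 1/m$ for integers $m \ge 2$, the event $E := \bigcap_{m \ge 2}\{T^{\star}_{1/m} < +\infty\}$ is a countable intersection of probability-one events, hence $\pr{E} = 1$. On $E$, for every $m$ there is an index $k_m := T^{\star}_{1/m}$ with $\norminf{\nabla f(X^{k_m})} \le 1/m$; in particular $\inf_{k \in \N}\norminf{\nabla f(X^k)} = 0$ on $E$.

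The remaining, and only delicate, step is to upgrade this $\inf$-type conclusion to the $\liminf$-type statement~\eqref{liminfGrad}, i.e. to produce a genuine subsequence of indices tending to infinity along which the gradient norm vanishes. Since $\epsilon \mapsto T^{\star}_{\epsilon}$ is nonincreasing, the sequence $(k_m)_m$ is nondecreasing and converges to some $k^{\star}\in\N\cup\{+\infty\}$. If $k^{\star}=+\infty$ one extracts a strictly increasing subsequence of $(k_m)_m$, along which $\norminf{\nabla f(X^{k_m})}\le 1/m\to 0$, and \eqref{liminfGrad} holds on that path. If $k^{\star}$ is finite then $\nabla f(X^{k^{\star}})=0$, and one must argue that the gradient stays small infinitely often afterwards; this can be done either directly — by checking via Assumption~\ref{gradient} and Lemma~\ref{implication} that once $\Dk$ is small no successful step can move the iterate away from an exact stationary point, so that $X^k$ freezes at $X^{k^{\star}}$ — or, more robustly, by a contradiction/renewal argument: if $\liminf_k\norminf{\nabla f(X^k)}>0$ held with positive probability, then on that event, since $\Dk\to 0$ almost surely (as established after Theorem~\ref{zerothOrder}), eventually $\norminf{\nabla f(X^k)}$ would be bounded below while $\Dk<\deps$, so Lemma~\ref{implication} would force every good iteration to be successful, and the induced upward drift of $\{\Dk\}$ — exactly the dynamics of Assumption~\ref{renewalReward}-{\it (ii)} with $q=\beta>1/2$ — would contradict $\Dk\to 0$.

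I expect this last case analysis to be the main obstacle, as it is precisely the gap between ``the gradient is small at some iterate'' and ``the gradient is small infinitely often''; the passage from finite expectation to almost-sure finiteness and the countable intersection are routine.
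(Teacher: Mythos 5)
The paper itself contains no written proof of Theorem~\ref{liminf}: it simply asserts the result is a ``simple consequence'' of Theorem~\ref{mainResult}, pointing to the analogous corollary in the stochastic line-search literature. Your first two steps (finite expectation of $T^{\star}_{\epsilon}$ implies $T^{\star}_{\epsilon}<+\infty$ almost surely, then a countable intersection over $\epsilon=1/m$) are exactly that intended reading, and it is to your credit that you notice this alone only gives $\inf_k\norminf{\nabla f(X^k)}=0$; the inf-to-liminf upgrade is a genuine issue, not a formality.

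Where I push back is on your two proposed resolutions. The first one fails: an exact stationary point does not ``freeze'' the algorithm. If $X^{k^{\star}}$ is a saddle point or maximizer, descent directions still exist in $\mathbb{D}^{k}$, so a successful step with perfectly good estimates can move the iterate away, and Lemma~\ref{implication} offers no protection -- it only says ``large gradient $+$ small step $+$ good estimates $\Rightarrow$ success'', never the converse; bad estimates can moreover trigger acceptance regardless of $\nabla f$. Your second, contradiction-based argument is the right idea (it is essentially the argument used in the references the paper cites), but as written it conditions the ``upward drift'' on the tail event $\{\liminf_k\norminf{\nabla f(X^k)}>0\}$, which is not adapted to $\{\mathcal{F}_k\}$, so the submartingale-type dynamics of Assumption~\ref{renewalReward}-{\it (ii)} cannot be invoked on that event directly. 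The standard repair is the coupling already set up in Section~\ref{ExpSect3}: with $W_k=2(\ijk-\tfrac{1}{2})$ and conditional probability of an up-step at least $\beta>1/2$, one has $W_k\to+\infty$ almost surely; on the bad event, for $k$ large enough that the gradient stays above some $\epsilon'$ and Lemma~\ref{implication} applies whenever $\Dk\le\deps$, one bounds $\ln\Dk$ from below by $\lambda W_k$ up to additive constants (handling the cap at $\deps$), giving $\limsup_k\Dk\ge\tau\deps>0$ there and contradicting the summability $\sum_k(\Dk)^p<+\infty$ a.s.\ stated after Theorem~\ref{zerothOrder}. Alternatively, and closest to ``a simple consequence of Theorem~\ref{mainResult}'': apply the renewal bound to the shifted hitting times $T^{\star}_{n,\epsilon}:=\inf\{k\ge n:\norminf{\nabla f(X^k)}\le\epsilon\}$; Theorem~\ref{zerothOrder} makes $\Phi_k$ a nonnegative supermartingale, so $\E{\Phi_n}\le\Phi_0$ and the same machinery yields $\E{T^{\star}_{n,\epsilon}}<+\infty$ for every $n$, and intersecting over countably many pairs $(n,1/m)$ gives~\eqref{liminfGrad} with no case analysis at all.
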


\section*{\blr Discussion}
This manuscript presents the first convergence rate analysis of a broad class of stochastic directional direct-search (SDDS) algorithms, designed for the unconstrained optimization of noisy blackboxes, and based on imposing a sufficient decrease condition when accepting new iterates. Using an existing supermartingale-based framework for the analysis, the methodology for deriving the worst case complexity of SDDS algorithms heavily relies on bounding an expected stopping time associated to the stochastic process generated by the algorithms. The analysis showed that SDDS algorithms have the same worst case complexity as any other first-order optimization method in a nonconvex setting. In particular, this complexity bound matches in some sense its deterministic counterparts despite the fact that function estimates are sometimes allowed to be arbitrarily inaccurate. The main novelty of the present research compared to many others on the worst case complexity analysis of stochastic DFO methods, lies in the fact that the proposed method does not need any gradient information to find descent directions. 

The analysis in the present manuscript strongly relies on the assumption that function estimates are unbiased. Thus, obtaining worst case complexity results when such estimates are possibly biased is a topic for future research. 

\section*{Acknowledgments}
The author is grateful to SÈbastien Le~Digabel and Charles Audet from Polytechnique Montr\'eal and Michael Kokkolaras from McGill university for valuable discussions and constructive suggestions that improve the quality of the presentation. This work is supported by the NSERC CRD RDCPJ 490744-15 grant and by an Innov\'E\'E grant, both in collaboration with Hydro-Qu\'ebec and Rio Tinto. 


\bibliographystyle{plain}
\bibliography{bibliography,bib}

\begin{thebibliography}{10}

\bibitem{alarie2019optimization}
S.~Alarie, C.~Audet, P.~Y. Bouchet, and S.~Le Digabel.
\newblock {Optimization of noisy blackboxes with adaptive precision}.
\newblock Technical Report G-2019-84, Les cahiers du GERAD, 2019.

\bibitem{AuDe03a}
C.~Audet and J.E. {Dennis, Jr.}
\newblock Analysis of generalized pattern searches.
\newblock {\em SIAM Journal on Optimization}, 13(3):889--903, 2003.

\bibitem{AuDe2006}
C.~Audet and J.E. {Dennis, Jr.}
\newblock {Mesh Adaptive Direct Search Algorithms for Constrained
  Optimization}.
\newblock {\em SIAM Journal on Optimization}, 17(1):188--217, 2006.

\bibitem{audet2019stomads}
C.~Audet, K.~J. Dzahini, M.~Kokkolaras, and S.~Le Digabel.
\newblock {StoMADS: Stochastic blackbox optimization using probabilistic
  estimates}.
\newblock Technical Report G-2019-30, Les cahiers du GERAD, 2019.

\bibitem{AuHa2017}
C.~Audet and W.~Hare.
\newblock {\em {Derivative-Free and Blackbox Optimization}}.
\newblock Springer Series in Operations Research and Financial Engineering.
  Springer International Publishing, Cham, Switzerland, 2017.

\bibitem{berahas2019global}
A.~S. Berahas, L.~Cao, and K.~Scheinberg.
\newblock {Global Convergence Rate Analysis of a Generic Line Search Algorithm
  with Noise}.
\newblock {\em arXiv}, 2019.

\bibitem{bhattacharya2007basic}
R.N. Bhattacharya and E.C. Waymire.
\newblock {\em {A basic course in probability theory}}, volume~69.
\newblock Springer, 2007.

\bibitem{blanchet2019convergence}
J.~Blanchet, C.~Cartis, M.~Menickelly, and K.~Scheinberg.
\newblock {Convergence Rate Analysis of a Stochastic Trust-Region Method via
  Supermartingales}.
\newblock {\em INFORMS Journal on Optimization}, 2019.

\bibitem{chen2018stochastic}
R.~Chen, M.~Menickelly, and K.~Scheinberg.
\newblock {Stochastic optimization using a trust-region method and random
  models}.
\newblock {\em Mathematical Programming}, 169(2):447--487, 2018.

\bibitem{CoScVibook}
A.R. Conn, K.~Scheinberg, and L.N. Vicente.
\newblock {\em {Introduction to Derivative-Free Optimization}}.
\newblock MOS-SIAM Series on Optimization. SIAM, Philadelphia, 2009.

\bibitem{GrRoViZh2015}
S.~Gratton, C.W. Royer, L.N. Vicente, and Z.~Zhang.
\newblock {Direct search based on probabilistic descent}.
\newblock {\em SIAM Journal on Optimization}, 25(3):1515--1541, 2015.

\bibitem{KoLeTo03a}
T.G. Kolda, R.M. Lewis, and V.~Torczon.
\newblock Optimization by direct search: New perspectives on some classical and
  modern methods.
\newblock {\em SIAM Review}, 45(3):385--482, 2003.

\bibitem{LaBi2016}
J.~Larson and S.C. Billups.
\newblock {Stochastic derivative-free optimization using a trust region
  framework}.
\newblock {\em Computational Optimization and Applications}, 64(3):619--645,
  2016.

\bibitem{paquette2018stochastic}
C.~Paquette and K.~Scheinberg.
\newblock {A Stochastic Line Search Method with Expected Complexity Analysis}.
\newblock {\em SIAM Journal on Optimization}, 30(1):349--376, 2020.

\bibitem{Vicente2013}
L.~N. Vicente.
\newblock Worst case complexity of direct search.
\newblock {\em EURO Journal on Computational Optimization}, 1(1):143--153,
  2013.

\bibitem{wang2019stochastic}
X.~Wang and Y.~Yuan.
\newblock {Stochastic Trust Region Methods with Trust Region Radius Depending
  on Probabilistic Models}.
\newblock {\em arXiv}, 2019.

\end{thebibliography}

\end{document}